\newtheorem{theorem}{Theorem}[section]
\newtheorem{corollary}[theorem]{Corollary}
\newtheorem{definition}[theorem]{Definition}
\newtheorem{proposition}[theorem]{Proposition}
\DeclareMathOperator{\Ad}{Ad}
\DeclareMathOperator{\tr}{tr}
\DeclareMathOperator{\Real}{Re}
\DeclareMathOperator{\Imaginary}{Im}
\DeclareMathOperator{\Hom}{Hom}
\DeclareMathOperator{\Teich}{Teich}
\renewcommand{\Re}{\Real}
\renewcommand{\Im}{\Imaginary}
\title{Space of circle patterns on tori and its symplectic form}
\author{Wai Yeung Lam}
\thanks{This work was partially supported by the FNR grant CoSH O20/14766753.}
\address{Department of Mathematics, University of Luxembourg, Maison du nombre, 6 avenue de la Fonte, L-4364 Esch-sur-Alzette, Luxembourg.} \email{wyeunglam@gmail.com}
\begin{document}

		\maketitle

	\begin{abstract}
We consider circle patterns on closed tori equipped with complex projective structures. There is an embedding of the space of circle patterns to the Teichm\"{u}ller space of a punctured surface. Via the embedding, the Weil-Petersson symplectic form is pulled back to the space of circle patterns. We investigate its non-degeneracy. On the other hand, we also complete a conjecture that the space of circle patterns is homeomorphic to the Teichm\"{u}ller space of the closed torus.

	\end{abstract}

\section{Introduction}

Conformal maps in the plane are characterized as mappings sending infinitesimal circles to themselves. Instead of infinitesimal size, a circle packing in the plane is a configuration of finite-size circles where certain pairs are mutually tangent. Thurston proposed regarding the map induced from two circle packings with the same tangency pattern as a discrete conformal map \cite{Stephenson2005}. With circle packings, one could define discrete conformal structures and compare them with classical conformal structures.

Generalizing the notion of circle packings, a circle pattern in the plane is a realization of a planar graph such that each face has a circumcircle passing through the vertices. By adding diagonals, we assume each face is triangular and the circle pattern is determined by cross ratios as follows. Assume $(V, E, F)$ is a triangulation of the planar graph. Given a realization of the vertices $z:V \to \mathbb{C}\cup\{\infty\}$, we associate a complex cross ratio to the common edge $\{ij\}$ shared by triangles $\{ijk\}$ and $\{jil\}$ 
\[
X_{ij} :=  -\frac{(z_k - z_i)(z_l -z_j)}{(z_i - z_l)(z_j - z_k)} =X_{ji} \in \mathbb{C}
\]
It defines a function  $X: E  \to \mathbb{C}$ satisfying certain polynomial equations, which can be defined generally on any surface. 
\begin{definition}\label{def:crsys}
	Suppose $(V,E,F)$ is a triangulation of a closed surface $S_g$ of genus $g$, where $V$, $E$ and $F$ denote respectively the sets of vertices, edges and faces. An unoriented edge is denoted by $\{ij\}=\{ji\}$ indicating that its end points are vertices $i$ and $ j$. A cross ratio system is an assignment $X:E \to \mathbb{C}$ such that for every vertex $i$ with adjacent vertices numbered as $1$, $2$, ..., $r$ in the clockwise order counted from the link of $i$,
	\begin{gather} 
		\Pi_{j=1}^n X_{ij} =1  \label{eq:crproduct}\\
		X_{i1} + X_{i1} X_{i2} + X_{i1}X_{i2}X_{i3} + \dots +  X_{i1}X_{i2}\dots X_{ir} =0 \label{eq:crsum}
	\end{gather}
	where $X_{ij} = X_{ji}$. 
\end{definition}

A cross ratio system provides a recipe to lay out neighboring circumdisks. Equations \eqref{eq:crproduct} and \eqref{eq:crsum} ensure that the holonomy around each vertex under the gluing construction is the identity. 

We are interested in circle patterns with prescribed Delaunay intersection angles. Observe that the imaginary part $\Im (\log X) : E \to [0,2\pi)$ is the intersection angles of circumdisks. 

\begin{definition} Given a triangulation of $S_g$, a Delaunay angle structure is an assignment of angles $\Theta:E \to [0,\pi)$ with $\Theta_{ij}=\Theta_{ji}$ satisfying the following:   
	\begin{enumerate}[(i)]
		\item For every vertex $i$,  \[ \sum_j \Theta_{ij} = 2\pi\] where the sum is taken over the neighboring vertices of $i$ on the universal cover.
		\item For any collection of edges $(e_0,e_1,e_2,\dots,e_r=e_0)$ whose dual edges form a simple closed contractable path on the surface, then
		\[
		\sum_{i=1}^{n} \Theta_{e_i} > 2\pi
		\]
		unless the path encloses exactly one primal vertex.
	\end{enumerate}
	We consider cross ratio systems with prescribed Delaunay angles 
	\[
	P(\Theta) = \{ X: E \to \mathbb{C}| \Im (\log X) = \Theta \text{ and satisfying } \eqref{eq:crproduct}\eqref{eq:crsum} \}.
	\]
\end{definition}    
It is known that these conditions on $\Theta$ are necessary and sufficient for $P(\Theta)$ to be non-empty \cite{Rivin}. Generally, every Delaunay cross ratio system in $P(\Theta)$ induces a complex projective structure on the closed surface. We denote $P(S_g)$ the space of marked complex projective structure on the closed surface. By forgetting the circle pattern and considering the underlying complex projective structure, one obtains a mapping from the space of circle patterns to the Teichm\"{u}ller space of the closed surface via a composition of maps
\[
P(\Theta) \xrightarrow{f} P(S_g) \xrightarrow{\pi} \Teich(S_g)
\]
where $f$ is the forgetful map and $\pi$ is the classical uniformization map for Riemann surfaces. It is conjectured that $\pi \circ f$ is a homeomorphism \cite{KMT2003}. For $g>1$, the conjecture is largely open. There are partial results but it is not yet clear whether $P(\Theta)$ is a manifold in general \cite{BW2023,Lam2024s,Schlenker2018}. For the torus case $g=1$, it is better understood.

\begin{theorem}[\cite{Lam2021}]
 Given any Delaunay angle structure $\Theta$ on a torus $S_1$, the mapping \[ \pi\circ f:P(\Theta)- \{X^{\dagger}\} \to  \Teich(S_1)- \{\tau^{\dagger}\}\] is a finite-sheet covering map where $X^{\dagger}$ is the unique cross ratio system that induces an Euclidean torus and $\tau^{\dagger}$ is the associated conformal structure. In particular, the projection \[ \pi\circ f:P(\Theta)\to  \Teich(S_1)\] is a covering map with at most one branch point. 
\end{theorem}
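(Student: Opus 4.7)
The plan is to verify the two standard criteria ensuring that $\pi \circ f$, restricted to the punctured spaces, is a finite-sheeted covering map: that it is a local homeomorphism between manifolds of the same real dimension and that it is a proper map. Since $\Teich(S_1) - \{\tau^\dagger\}$ is connected, the sheet number is then constant, giving a finite covering; the extension of the statement to all of $P(\Theta)$ with at most one branch point then follows from the uniqueness of $X^\dagger$ together with continuity of $\pi \circ f$ at that point.

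For the local structure, a cross ratio system $X$ on a torus determines, via gluing of circumcircles, a complex projective structure whose holonomy $\pi_1(S_1) \to PSL(2,\C)$ is abelian. Away from the Euclidean structure $X^\dagger$ the holonomy is loxodromic, so after M\"{o}bius normalization its two generators act on $\C P^1$ as $z \mapsto \alpha z$ and $z \mapsto \beta z$. The pair $(\log \alpha, \log \beta) \in \C^2$ provides local coordinates on the space of such projective structures modulo conjugation, and the constraint $\Im \log X = \Theta$ cuts out a real two-dimensional real-analytic submanifold; I would verify this via an implicit function theorem argument on the system \eqref{eq:crproduct}--\eqref{eq:crsum}, matching the real dimensions of $P(\Theta)$ and $\Teich(S_1)$.

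The modulus of the induced complex torus is $\tau = (\log \beta)/(\log \alpha)$, modulo the action of $SL(2,\mathbb{Z})$, so after fixing a fundamental domain $\pi \circ f$ is essentially the map $X \mapsto \tau$. I would show its derivative has full rank away from $X^\dagger$: any infinitesimal deformation of $X$ preserving $\tau$ must preserve the holonomy character up to simultaneous conjugation, and a rigidity argument using the Delaunay angle conditions would force such a deformation to vanish. At $X^\dagger$ the character degenerates, which accounts for the possible branch point.

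For properness I would argue that a sequence $X_n \in P(\Theta)$ leaving every compact set forces $\pi \circ f(X_n)$ to leave every compact set in $\Teich(S_1) - \{\tau^\dagger\}$. Divergence of $X_n$ means some $X_{ij}^{(n)} \to 0$ or $\infty$; condition (ii) of the Delaunay angle structure is used to show such a degeneration is \emph{visible} in the holonomy, so the character $(\alpha_n,\beta_n)$ either approaches the Euclidean limit (excluded after puncturing) or escapes the loxodromic locus, whence $\tau_n$ leaves compacta in $\Teich(S_1)$. This properness step is the expected main obstacle: circle patterns admit several geometric modes of degeneration, and ruling out hidden ones where cross ratios diverge while the holonomy character stays bounded and non-Euclidean requires a uniform area/angle estimate on the universal cover, invoking the contractible-loop inequality to prevent the developing map from collapsing circles without a corresponding change in holonomy.
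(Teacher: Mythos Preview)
This theorem is quoted from \cite{Lam2021} rather than proved in the present paper; the paper only records that ``the proof involves verifying the local diffeomorphism with the use of the discrete Laplacian'' and supplies the relevant ingredients. Rivin's variational result (Proposition~\ref{prop:rivin}, Corollary~\ref{cor:puncplane}) already gives a \emph{global} parametrization of $P(\Theta)$ by $(A_1,A_2)\in\mathbb{R}^2/\pm$, so no implicit-function argument on the cross-ratio equations is needed for the manifold structure. Properness is then handled by the uniform bound on $|\Im c|$ and $|\Im c\tau|$ (Proposition~\ref{lem:bounded}) together with the explicit formula $\tau=(A_2+i\,\Im c\tau)/(A_1+i\,\Im c)$. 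The local diffeomorphism comes from the discrete harmonic conjugate $\mathfrak{h}_X$ and the strict energy inequality \eqref{eq:ineq2021} of Proposition~\ref{prop:gtpaper}.

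Your outline has a genuine gap at the local step. You claim that an infinitesimal deformation in $T_X P(\Theta)$ preserving $\tau$ must preserve the holonomy character, and then invoke a ``rigidity argument'' to conclude it vanishes. But in the $(A_1,A_2)$-coordinates, preserving the holonomy already \emph{is} the vanishing of the deformation, so all the content has been pushed into your first implication---and that implication is not automatic. The fibre of $\pi:P(S_1)\to\Teich(S_1)$ over a fixed $\tau$ is the entire complex line of affine parameters $c$, so fixing $\tau$ alone does not fix the holonomy; one must rule out the possibility that moving $(A_1,A_2)$ and letting $(B_1,B_2)$ respond via $\mathfrak{h}_X$ leaves the ratio $(A_2+iB_2)/(A_1+iB_1)$ unchanged. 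This is exactly what the strict comparison \eqref{eq:ineq2021} between discrete and smooth Dirichlet energies accomplishes, and your sketch does not identify any substitute for it. Your properness argument via raw cross-ratio degeneration is likewise the hard route: once Rivin's parametrization and Proposition~\ref{lem:bounded} are in hand, properness follows directly from the formula for $\tau$, whereas your approach would need to re-derive equivalent uniform bounds from scratch.
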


The proof involves verifying the local diffeomorphism with the use of the discrete Laplacian. However the argument fails at Euclidean structures, where the character variety of $P(S_1)$ becomes singular \cite{BG2005}. We remedy it by taking a topological approach.
\begin{theorem}\label{thm:main1}
	The covering map 
	 \[ \pi\circ f:P(\Theta)- \{X^{\dagger}\} \to  \Teich(S_1)- \{\tau^{\dagger}\}\]
	 has degree $1$ and hence 
	 \[ \pi\circ f:P(\Theta)\to  \Teich(S_1)\] 
	 is a homeomorphism.
\end{theorem}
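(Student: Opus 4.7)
My plan is to reduce the degree-$1$ statement to a local computation at $X^{\dagger}$ and then carry it out topologically. By the preceding theorem of \cite{Lam2021}, $\pi \circ f$ is a covering of some degree $d \geq 1$ away from $X^{\dagger}$, with $X^{\dagger}$ the unique preimage of $\tau^{\dagger}$. Since $\Teich(S_1) \cong \mathbb{H}^2$, the puncture $\Teich(S_1) \setminus \{\tau^{\dagger}\}$ has fundamental group $\mathbb{Z}$, so the connected cover $P(\Theta) \setminus \{X^{\dagger}\}$ corresponds to the subgroup $d\mathbb{Z}$; it therefore suffices to show $d = 1$.

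I would first verify that $P(\Theta)$ is a topological $2$-manifold including at $X^{\dagger}$. The complement $P(\Theta) \setminus \{X^{\dagger}\}$ is a $2$-manifold as a covering of one. For the point $X^{\dagger}$ itself I would use the abelian character variety of $\pi_1(T^2) = \mathbb{Z}^2$ to write down explicit local coordinates: nearby complex projective structures can be parametrized by the eigenvalues of the two commuting holonomy generators, yielding a two-dimensional chart around the parabolic representation induced by the Euclidean torus.

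Once this manifold structure is in place, the extended map $\pi \circ f : P(\Theta) \to \Teich(S_1)$ is a degree-$d$ branched cover between $2$-manifolds with at most one branch point, locally modeled by $z \mapsto z^d$. To pin down $d$, I would construct an explicit one-parameter deformation $\{X_t\}$ of $X^{\dagger}$ in $P(\Theta)$ and track the resulting conformal modulus $\tau(X_t) \in \Teich(S_1)$; equivalently, I would lift a small loop around $\tau^{\dagger}$ and verify that it closes up after a single traversal, forcing $d = 1$.

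The main obstacle is exactly this last step. The $PSL(2,\mathbb{C})$-character variety of $\pi_1(T^2)$ is singular at the parabolic representation of the Euclidean torus \cite{BG2005}, which is precisely why the infinitesimal argument via the discrete Laplacian used in the preceding theorem breaks down there. Carrying out the monodromy computation from the second-order expansion of the deformation, without access to the implicit function theorem at $X^{\dagger}$, will be the crux of the argument; a natural route is to identify a topological invariant of the deformation (e.g., the winding of a suitable holonomy eigenvalue in $\mathbb{C}^{*}$) that distinguishes $\tau^{\dagger}$ inside $\Teich(S_1)$ and matches the winding of the chosen loop around $X^{\dagger}$.
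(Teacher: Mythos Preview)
Your proposal correctly isolates the task---showing the covering has degree $1$---but the route you sketch has a genuine gap, and it is precisely the gap you yourself flag. You propose to (i) build a chart for $P(\Theta)$ near $X^{\dagger}$ from the eigenvalues of the commuting holonomies and then (ii) compute the local degree by tracking a \emph{small} loop around $X^{\dagger}$. Step (i) is already problematic: at the Euclidean torus the holonomy is parabolic, the eigenvalues collapse to $1$, and the $PSL(2,\mathbb{C})$-character variety of $\mathbb{Z}^2$ is singular there, so ``eigenvalues of the generators'' do not furnish a smooth two-dimensional chart. Step (ii) then asks for exactly the second-order local analysis at the singular point that the discrete-Laplacian argument in \cite{Lam2021} could not supply; your final paragraph is an honest admission that you do not yet have a mechanism to carry it out.

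The paper's proof avoids this local analysis entirely. First, a global chart for $P(\Theta)$ is already available from Rivin's theorem (Corollary~\ref{cor:puncplane}): $P(\Theta)\cong\Omega=\mathbb{R}^2/\{(A_1,A_2)\sim(-A_1,-A_2)\}$, with the Euclidean point at the origin. No delicate chart near $X^{\dagger}$ is needed. Second, and this is the key idea you are missing, the degree is read off from a \emph{large} loop $\gamma_R(t)=(R\cos t,R\sin t)$, $t\in[0,\pi]$, not a small one. Using $\tau=(c\tau)/c$ with $\Re c=A_1$, $\Re(c\tau)=A_2$, and the uniform bounds on $|\Im c|$ and $|\Im(c\tau)|$ from Proposition~\ref{lem:bounded}, one gets (after the M\"obius map $g(z)=(z-i)/(z+i)$ to the disk)
\[
g\circ\pi\circ f\circ\gamma_R(t)=e^{2it}+O(R^{-1}),
\]
so for $R$ large the image of a generator of $\pi_1(\Omega_0)$ is a generator of $\pi_1(\mathbb{H}_0)$. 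That forces $d=1$ without ever confronting the singularity at $X^{\dagger}$. In short: push the loop to infinity, where the bounded imaginary parts become negligible, rather than shrinking it toward the bad point.
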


On the other hand, there is an embedding of $P(\Theta)$ to the Teichm\"{u}ller space of a punctured surface $\Teich(S_{g,n})$ with $n=|V|$, which consists of marked complete hyperbolic metrics with cusps at the punctures. The complex cross ratios $X$ provide a recipe to construct a pleated surface in hyperbolic 3-space by gluing ideal triangles with shear coordinates $\Re \log X$ and bending angles $\Im \log X$. By forgetting the bending angles, every circle pattern induces a complete hyperbolic metric with cusps on the punctured surface. Thus we have an embedding
\[
\tilde{f}:P(\Theta)  \xhookrightarrow{} \Teich(S_{g,n}) \cong \mathbb{R}^{6g-6+2n}
\]
Via the embedding $\tilde{f}$, the Weil-Petersson symplectic form $\omega_{P}$ on $\Teich(S_{g,n}) $ is pulled back to $P(\Theta)$.

In \cite{Lam2024s}, we related it to Goldman's symplectic form on $P(S_g)$. It was conjectured that the pullback is non-degenerate, and hence defines a symplectic structure on the space of circle patterns. With the tool of the discrete Laplacian, we verify it in the case of tori.

\begin{theorem}\label{thm:main2}
	Given any Delaunay angle structure $\Theta$ on a torus, the pullback of the symplectic form $\tilde{f}^*\omega_{P}$ is non-degenerate on $P(\Theta)-\{X^{\dagger}\}$.
\end{theorem}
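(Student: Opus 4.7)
The plan is to exploit the fact that Theorem~\ref{thm:main1} gives $\dim_{\mathbb{R}} P(\Theta)=2$, so that the skew form $\tilde f^*\omega_P$ is non-degenerate precisely when it is non-vanishing. It then suffices to exhibit, at each $X\in P(\Theta)\setminus\{X^\dagger\}$, two tangent vectors whose $\tilde f^*\omega_P$-pairing is non-zero.

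The first step is to parametrise $T_X P(\Theta)$ via the discrete Laplacian $\Delta_X$. Since $\Im\log X = \Theta$ is frozen, an infinitesimal deformation is recorded by a real-valued $u = \Re(\dot X/X):E\to\mathbb{R}$, and linearising \eqref{eq:crproduct}--\eqref{eq:crsum} at $X$ shows that $u$ is a cocycle for $\Delta_X$ in the sense of \cite{Lam2021}. For $X\neq X^\dagger$ the kernel of $\Delta_X$ consists only of constants, so the corresponding cohomology is two-dimensional; I would fix a basis $(u_\alpha,u_\beta)$ dual to a standard basis of $H_1(S_1;\mathbb{R})$.

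The second step uses the relation between $\tilde f^*\omega_P$ and Goldman's symplectic form on $P(S_1)$ established in \cite{Lam2024s}, together with Wolpert's shear-coordinate expression for $\omega_P$ on $\Teich(S_{1,n})$, to rewrite $\tilde f^*\omega_P(u_\alpha,u_\beta)$ as an explicit combinatorial sum over the edges of the triangulation, with coefficients depending on the Laplacian weights and on $X$. After manipulation this expression should reduce to a Riemann-bilinear-type pairing for a discrete holomorphic $1$-form built from $(u_\alpha,u_\beta)$. Because $\Delta_X$ has only constants in its kernel for $X\neq X^\dagger$, this $1$-form is non-trivial and its $\alpha,\beta$-periods are linearly independent, so the pairing is strictly signed and hence non-zero.

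The main obstacle is the intermediate translation above: \cite{Lam2024s}'s formula is phrased in terms of Goldman's form on the closed surface $S_1$, whereas $\omega_P$ lives on $\Teich(S_{1,n})$, so passing between the two requires careful bookkeeping of the cusp contributions at the $|V|$ punctures. These cusp terms are precisely what prevent $\tilde f^*\omega_P(u_\alpha,u_\beta)$ from collapsing to a quantity that would always vanish; and they also degenerate at $X^\dagger$, consistently with the enlargement of $\ker\Delta_X$ and with the singular character-variety point noted in \cite{BG2005}. Controlling these contributions uniformly on the complement of $X^\dagger$ is the crux of the argument.
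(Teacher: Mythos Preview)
Your outline correctly identifies the framework --- parametrize $T_X P(\Theta)$ by the period space $\mathbb{R}^2$ via discrete harmonic $1$-forms, and use the holonomy formula from \cite{Lam2024s} --- but the step where you declare the pairing non-zero is where the real content lies, and your justification for it does not work.

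Concretely, once one carries out the computation you sketch, the pullback reduces (for non-Euclidean $X$) to
\[
\tfrac12\,\tilde f^*\omega_P(x,\tilde x)=\omega(\mathfrak a,\tilde{\mathfrak a})-\omega(\mathfrak h_X(\mathfrak a),\mathfrak h_X(\tilde{\mathfrak a})),
\]
where $\mathfrak h_X:\mathbb R^2\to\mathbb R^2$ is the discrete harmonic-conjugate on periods. The imaginary part (the ``cusp contributions'' you worry about) cancels exactly by the symmetry $\omega(\mathfrak a,\mathfrak h_X\tilde{\mathfrak a})=-\omega(\mathfrak h_X\mathfrak a,\tilde{\mathfrak a})$, so those terms play no role. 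Now observe that if $\mathfrak h_X$ behaved like a genuine Hodge star --- i.e.\ if $\mathfrak h_X^2=-\mathrm{Id}$ --- this difference would vanish \emph{identically}, regardless of whether the periods of your discrete holomorphic $1$-form are linearly independent. So ``non-trivial $1$-form with independent periods, hence the Riemann-bilinear pairing is signed'' is exactly backwards: a perfect Riemann-bilinear identity would kill the form, not save it.

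What the paper actually uses is a strict comparison between discrete and smooth Dirichlet energies (Proposition~\ref{prop:gtpaper}, inequality~\eqref{eq:ineq2021}), which forces $\|\mathfrak a\|_\tau>\|\mathfrak h_X(\mathfrak a)\|_\tau$ for every non-zero $\mathfrak a$ and hence $\mathfrak h_X^2\neq-\mathrm{Id}$ (Corollary~\ref{cor:nonid}). Degeneracy of $\tilde f^*\omega_P$ is then shown, via the symmetry above, to force $\mathrm{Id}+\mathfrak h_X^2=\alpha\,\mathfrak h_X$ with $\alpha=0$, contradicting this. Your proposal is missing this analytic ingredient entirely; without it there is no mechanism preventing $\mathfrak h_X$ from being $\omega$-symplectic, in which case $\tilde f^*\omega_P$ would be zero.
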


Although our approach of discrete Laplacian is not applicable at Euclidean tori, we believe that the bilinear form is still non-degenerate there. In the last section, we illustrate it with Euclidean tori obtained from triangular lattices.

\section{Complex projective structures on tori}

We recall the complex projective structures on tori and show that $\pi\circ f$ is a homeomorphism (Theorem \ref{thm:main1}).

\begin{definition}
	A complex projective structure on a closed surface $S_{g}$ is a maximal atlas of charts from open subsets of $S_{g}$ to the Riemann sphere such that the transition functions are restrictions of M\"{o}bius transformations. These charts are called projective charts.
	
	Two complex projective structures are marked isomorphic if there is a diffeomorphism homotopic to the identity mapping projective charts to projective charts. We denote $P(S_{g})$ the space of marked complex projective structures up to isomorphism.
\end{definition}

Given a complex projective structure on a torus $S_1$, there is a developing map of the universal cover to the Riemann sphere and a holonomy representation of the fundamental group $\pi_{1}(S_1)$ 
\[
\rho \in \Hom(\pi_{1}(S_1),PSL(2,\mathbb{C})).
\]
Since the fundamental group of the torus is abelian, the holonomy of the complex projective torus has one common fixed point or two depending on whether the torus admits an Euuclidean structure. By normalizing one of the fixed point at infinity, one obtains a complex affine structure. 

\begin{definition}
	A complex affine structure on a surface $S$ is a maximal atlas of charts from open subsets of $S$ to $\mathbb{C}$ such that the transition functions are restrictions of complex affine transformations $z \mapsto a z + b$ for some $a,b \in \mathbb{C}$ with $a \neq 0$.
\end{definition}

Indeed, all complex projective structures can be reduced to complex affine structures.

\begin{proposition}[Gunning \cite{Gunning}]\label{prop:gunning}
	Every complex projective structure on a torus can be reduced to an affine structure.
\end{proposition}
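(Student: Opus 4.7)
The plan is to use the Schwarzian derivative together with the fact that the torus carries a one-dimensional space of holomorphic quadratic differentials. Any projective atlas on $S_{1}$ induces an underlying Riemann surface structure, since M\"{o}bius maps are conformal; let $z$ denote the global holomorphic coordinate on the universal cover $\mathbb{C}$ and $\Lambda\subset\mathbb{C}$ the lattice of deck translations. The developing map $\dev\colon\mathbb{C}\to\mathbb{C}\cup\{\infty\}$ is a locally injective meromorphic function satisfying $\dev(z+\omega)=\rho(\omega)\cdot\dev(z)$ for every $\omega\in\Lambda$, where $\rho$ is the holonomy representation.

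My first step would be to show that the Schwarzian $\{\dev,z\}=\dev'''/\dev'-\tfrac{3}{2}(\dev''/\dev')^{2}$ is a constant $c\in\mathbb{C}$. The cocycle $\{f\circ g,z\}=\{f,g(z)\}g'(z)^{2}+\{g,z\}$ together with the vanishing of the Schwarzian on M\"{o}bius maps and on translations yields $\{\dev,z\}(z+\omega)=\{\dev,z\}(z)$ for all $\omega\in\Lambda$. A short local calculation shows that $\{\dev,z\}$ extends holomorphically across the simple poles of $\dev$, so $\{\dev,z\}$ is a bounded entire function, hence constant by Liouville.

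Next I would solve the ODE $\{\dev,z\}=c$ in two cases. For $c=0$, every solution is the restriction of a M\"{o}bius map, so after post-composing by a suitable $T\in PSL(2,\mathbb{C})$ (which does not alter the projective structure) we may assume $\dev(z)=z$; equivariance then forces $\rho(\omega)$ to be the translation $w\mapsto w+\omega$, and the structure refines to the standard translation structure on $\mathbb{C}/\Lambda$. For $c\neq 0$, a direct computation gives $\dev_{0}(z)=e^{\lambda z}$ with $\lambda^{2}=-2c$ as an explicit solution, and by the standard uniqueness property of the Schwarzian every solution has the form $T\circ\dev_{0}$ for a unique $T\in PSL(2,\mathbb{C})$. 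Post-composing $\dev$ by $T^{-1}$, we may take $\dev(z)=e^{\lambda z}$, whose image lies in $\mathbb{C}^{*}\subset\mathbb{C}$; equivariance then gives $\rho(\omega)\colon w\mapsto e^{\lambda\omega}w$, which is affine.

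The main obstacle is the nonzero case: once $c\neq 0$, one must identify every solution of $\{\dev,z\}=c$ up to $PSL(2,\mathbb{C})$-action and then exploit this freedom to push the developing image out of the common fixed point of the holonomy and into $\mathbb{C}$. The exponential solution above handles this cleanly, and also explains why the result is special to the torus: only for genus one is the holomorphic quadratic differential $\{\dev,z\}\,dz^{2}$ necessarily constant, allowing the ODE to be integrated in elementary closed form.
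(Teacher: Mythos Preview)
Your argument is correct. The paper, however, does not supply its own proof of this proposition: it is stated with a citation to Gunning and no further justification beyond a one-line heuristic in the preceding paragraph. That heuristic is a genuinely different route from yours: since $\pi_{1}(S_{1})$ is abelian, the two holonomy generators commute in $PSL(2,\mathbb{C})$ and therefore share a fixed point on $\mathbb{C}\cup\{\infty\}$; conjugating that point to $\infty$ makes the holonomy affine. As a proof this sketch is incomplete---affine holonomy alone does not guarantee that the developing map avoids $\infty$---and the paper does not fill this gap, simply deferring to Gunning. Your Schwarzian argument does close the gap, because once $\dev$ is normalized to $z$ or $e^{\lambda z}$ it visibly misses $\infty$.

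It is worth noting that the paper later (in the paragraph following Proposition~\ref{lem:bounded} in the numbering, the one parametrizing affine structures by $c$) runs an argument in exactly your spirit but one level down: for an \emph{affine} developing map $d$ it observes that $d''/d'$ is holomorphic and $\Lambda$-periodic, hence constant, and integrates to get $d(z)=az+b$ or $d(z)=ae^{cz}+b$. Your proof is the projective analogue, with the Schwarzian replacing the pre-Schwarzian $d''/d'$. So while the paper never proves Gunning's proposition itself, the technique you chose is precisely the one the paper employs at the affine level.
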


Throughout the paper, we denote $\gamma_1,\gamma_2 \in \pi_{1}(S)$ generators of the fundamental group of the torus and write their holonomy representation $\rho_1= \rho(\gamma_1)$ and $\rho_2= \rho(\gamma_2)$ in $SL(2,\mathbb{C})$. By moving a common fixed point to infinity, the holonomy representation of a complex projective can be normalized to one of the followings. 
\begin{enumerate}[(I)]
	\item (Euclidean tori) The holonomy $\rho_1,\rho_2$ are translation, i.e. $\exists \beta_r \in \mathbb{C}$ such that \[
	(z\circ \gamma_r)_i = \rho_r (z_i) = z_i + \beta_r  \quad \forall \, i \in \hat{V}.\]
	\item (Non-Euclidean tori) The holonomy becomes stretched rotation, i.e. $\exists \alpha_r \in \mathbb{C}$ such that \[
	(z\circ \gamma_r)_i = \rho_r (z_i) = \alpha_r z_i  \quad \forall \, i \in \hat{V}.\]
\end{enumerate}

Circle patterns on affine tori can be parametrized using the affine holonomy.

\begin{proposition}[{\cite[Theorem 7.2]{Rivin}}] \label{prop:rivin}
	Let $\Theta$ be a Delaunay angle structure on a triangulated torus and $A_1,A_2 \in \mathbb{R} $. Then there exists a unique affine structure on the torus with affine holonomy  $\rho_r(z)=\alpha_r z + \beta_r$ such that $\log |\alpha_r|= A_r$ that support a unique circle pattern with cross ratio $X \in P(\Theta)$. The developing map $z$ is unique up to a global affine transformation.
\end{proposition}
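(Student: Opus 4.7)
The plan is to cast existence and uniqueness as a finite-dimensional convex variational problem on the universal cover, generalizing Rivin's variational principle for Euclidean circle patterns to the $A_r$-twisted equivariant setting. The parameter space is the affine space of log-radii $u:\hat V \to \mathbb{R}$ satisfying the twisted equivariance $u(\gamma_r\cdot i)=u(i)+A_r$ for $r=1,2$, which has real dimension $|V|$. Given such a $u$ together with the prescribed Delaunay angles $\Theta$, each triangle $\{ijk\}\in F$ yields a planar realisation whose three sides are the centre-to-centre segments of circumcircles of radii $e^{u_i},e^{u_j},e^{u_k}$ meeting pairwise at the intersection angles $\Theta_{ij},\Theta_{jk},\Theta_{ki}$. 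Denote by $\theta^{jk}_i(u_i,u_j,u_k)$ the interior angle of this Euclidean triangle at $i$.

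I would then introduce the functional
\[
E(u) \;=\; \sum_{\{ijk\}\in F} F_{ijk}(u_i,u_j,u_k) \;-\; 2\pi\sum_{i\in V} u_i,
\]
where $F_{ijk}$ is a Milnor--Lobachevsky-type primitive for the three triangle angles; its existence follows from the Schl\"afli-type reciprocity $\partial_{u_j}\theta^{jk}_i=\partial_{u_i}\theta^{ik}_j$. The partial derivative $\partial_{u_i}E$ is exactly the angular defect at $i$, so critical points of $E$ are in bijection with equivariant circle patterns realising $\Theta$ with the prescribed scalings $A_1,A_2$. Strict concavity of each $F_{ijk}$ modulo the overall shift direction, a classical fact going back to Rivin and Colin de Verdi\`ere, gives strict concavity of $E$ on the twisted space modulo constants, so the critical point, if it exists, is unique.

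Existence reduces to coercivity: one must show that $E$ is bounded above and that its gradient diverges whenever $u$ leaves a large compact set modulo constants. Here the Delaunay conditions (i) and (ii) on $\Theta$ enter crucially, controlling the behaviour of the $F_{ijk}$ as triangles degenerate. Once $u$ is in hand, one lays the associated triangles out in $\mathbb{C}$ by inductive gluing on the universal cover to obtain a developing map $z:\hat V\to\mathbb{C}$; the twisted equivariance of $u$ forces the holonomy $\rho_r$ to be an affine similarity with $|\alpha_r|=e^{A_r}$, and the residual ambiguity in $z$ is exactly a global complex affine transformation $z\mapsto az+b$, corresponding to the constant-shift freedom in $u$ together with the choice of an initial triangle.

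The hard part will be the coercivity argument in the $A_r$-twisted setting. In Rivin's original work the equivariance is trivial ($A_r=0$), and escape directions in $u$ are ruled out by a combinatorial argument invoking condition (ii). Here one must check that the two extra real parameters $A_1,A_2$ do not open new escape directions; equivalently, one has to verify that the combinatorial Delaunay condition (ii), which is blind to the $A_r$, still controls the asymptotics of $E$ after the twist. I expect this to be the main technical point, resolved by revisiting the combinatorial degeneration analysis edge by edge and by exploiting the cohomological fact that a divergent direction in the twisted space still has a well-defined class in $H^1(S_1;\mathbb{R})$ on which condition (ii) gives a sign.
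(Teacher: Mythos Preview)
The paper does not prove this proposition; it is quoted as a black box from Rivin (note the citation in the proposition header). So there is no ``paper's own proof'' to compare against, and what you have written is essentially a sketch of Rivin's original variational argument. The high-level strategy --- a strictly concave Milnor--Lobachevsky functional on equivariant log-radii, whose critical points are the desired patterns, with uniqueness from concavity and existence from coercivity driven by the Delaunay condition~(ii) --- is correct and is indeed how the untwisted case is handled; pushing it to prescribed $(A_1,A_2)$ is straightforward once the functional is set up on the right equivariant affine space.

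However, your combinatorics are dual to what they should be. In this paper's circle patterns the circles are the \emph{circumcircles of triangular faces}, not circles centred at vertices. Hence the log-radii $u$ live on $\hat F$, not $\hat V$; the twisted equivariance reads $u(\gamma_r\cdot\phi)=u(\phi)+A_r$ for faces $\phi$ (compare $R\circ\gamma_r=e^{A_r}R$ with $R:\tilde F'\to\mathbb{R}_{>0}$ in Section~4.3 of the paper); the elementary building blocks are not the primal triangles $\{ijk\}$ but the Euclidean triangles formed by a circumcentre and a pair of adjacent circumcentres; and the critical-point equation is the $2\pi$ angle-sum at each circumcentre, i.e.\ Equation~(4.2), which is indexed by $F$ and not by $V$. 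In particular your dimension count ``$|V|$'' is off --- for a triangulated torus $|F|=2|V|$ --- and the sentence ``circumcircles of radii $e^{u_i},e^{u_j},e^{u_k}$ meeting pairwise at the intersection angles $\Theta_{ij},\Theta_{jk},\Theta_{ki}$'' is not well-posed as written, since a single primal triangle carries only one circumcircle. Once you pass to the dual and relabel accordingly, the remainder of your outline (Schl\"afli reciprocity for the primitive, strict concavity modulo constants, coercivity from condition~(ii), and recovery of the developing map with affine holonomy $|\alpha_r|=e^{A_r}$) goes through unchanged.
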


Two different affine structures might correspond to the same complex projective structure. The affine holonomy always have a common fixed point at $\infty$. As long as it shares another fixed point in $\mathbb{C}$, we can apply an inversion to interchange the two fixed points and obtain a new affine structure, while the underlying complex projective structure remains the same, and so is the cross ratio. In the notation of Proposition \ref{prop:rivin}, an affine holonomy shares two fixed points if and only if $(A_1,A_2)\neq (0,0)$. 

\begin{corollary}\label{cor:puncplane}
For any Delaunay angle structure $\Theta$ on a triangulated surface, we have a homeomorphism given via Proposition \ref{prop:rivin}
\[
P(\Theta) \cong \Omega:= \mathbb{R}^2/ \sim 
\]
where the quotient is given by an equivalence relation $(A_1,A_2) \sim (-A_1,-A_2)$.
\end{corollary}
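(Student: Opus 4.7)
The plan is to read the map directly off Proposition \ref{prop:rivin}: assign to $(A_1,A_2) \in \mathbb{R}^2$ the cross ratio $X \in P(\Theta)$ of the unique circle pattern supported on the unique affine torus with $\log|\alpha_r| = A_r$. Call this map $\Phi$. Continuity of $\Phi$ is inherited from the continuous dependence of the affine holonomy on $(A_1,A_2)$ together with the continuous dependence of $X$ on the developed vertex positions.

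Next I would check that $\Phi$ descends to $\Omega$. When $(A_1,A_2) \neq (0,0)$, the abelian affine holonomy has a second common fixed point $p \in \mathbb{C}$; postcomposing the developing map with a M\"obius transformation that swaps $\infty$ and $p$ yields a different affine structure underlying the \emph{same} complex projective structure, with new multipliers $\alpha_r^{-1}$ (the two fixed points of a M\"obius transformation carry reciprocal multipliers), hence $A_r \mapsto -A_r$. Because the cross ratios $X_{ij}$ are M\"obius-invariant functions of the developed vertices, both affine structures yield the same $X$. At the fixed point $(0,0)$ the equivalence class is a singleton and nothing is to be checked. Thus $\Phi$ factors through a continuous map $\bar\Phi : \Omega \to P(\Theta)$.

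For surjectivity of $\bar\Phi$, any $X \in P(\Theta)$ induces a complex projective structure on $S_1$, which by Proposition \ref{prop:gunning} reduces to an affine structure; normalizing a common fixed point to $\infty$ produces a preimage $(\log|\alpha_1|, \log|\alpha_2|)$. For injectivity modulo $\sim$, two affine structures with the same $X$ share the same complex projective structure, and any two reductions to an affine structure differ by the choice of which common fixed point is sent to $\infty$; by the previous paragraph this interchange is exactly the involution $(A_1,A_2) \mapsto (-A_1,-A_2)$. Uniqueness of the circle pattern supported by a given affine structure (again Proposition \ref{prop:rivin}) rules out any further ambiguity.

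To upgrade the continuous bijection $\bar\Phi$ to a homeomorphism, I would verify properness of $\Phi$: as $|A_1|+|A_2| \to \infty$ the holonomy leaves every compact subset of the character variety, hence $X$ leaves every compact subset of $P(\Theta) \subset \mathbb{C}^E$. A proper continuous bijection between locally compact Hausdorff spaces is closed, hence a homeomorphism. The main subtlety is the point $(0,0)$, where the $\mathbb{Z}/2$ action is trivial; but this point corresponds to an affine holonomy with a single common fixed point in $\mathbb{C}\cup\{\infty\}$, so no inversion ambiguity arises there and the preimage count is consistent with the quotient $\Omega$.
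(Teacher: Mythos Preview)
Your proposal is correct and follows essentially the same line as the paper. The paper's argument is just the short paragraph preceding the corollary: Proposition~\ref{prop:rivin} parametrizes circle patterns by $(A_1,A_2)\in\mathbb{R}^2$, and two affine reductions of the same projective structure differ precisely by inverting at the second common fixed point, which swaps the multipliers and sends $(A_1,A_2)\mapsto(-A_1,-A_2)$; the case $(A_1,A_2)=(0,0)$ is exactly when only one fixed point exists. You have supplied the details the paper leaves implicit---continuity of $\Phi$, the use of Proposition~\ref{prop:gunning} for surjectivity, and a properness argument to promote the bijection to a homeomorphism---but the underlying mechanism is identical.
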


Given a Delaunay cross ratio system on the torus, it induces a complex projective structure and thus a conformal structure. The underlying conformal structure can be read off from the affine developing map. Recall that we can parameterize the affine structures with a fixed underlying conformal structure on the torus by a complex parameter $c$ in the upper half plane $\mathbb{H}$ as follows: Start with a Euclidean torus obtained by gluing the opposite sides of a parallelogram spanned by complex numbers $1$ and $\tau$ in the upper half plane where the horizontal side and the vertical side form the loops $\gamma_1$ and $\gamma_2$ after gluing. The parameter $\tau$ represent a marked conformal structure in the Teichmm\"{u}ller space $\Teich(S_1) \cong \mathbb{H}$. Let $d:\tilde{S}\to \mathbb{C}$ be a developing map of an affine structure with the same marked conformal structure. Its holonomy satisfies $d(z+1)= \alpha_1 d(z) + \beta_1$ and $d(z+\tau) = \alpha_2 d(z) + \beta_2$. Notice that $d$ is holomorphic and $d' \neq 0$. We have $d''/d'$ holomorphic and periodic on the torus. Thus $d''/d' = c$ for some constant $c\in \mathbb{C}$. In the case $c=0$, $d(z) =az + b$ and we get a Euclidean torus. In the case $c\neq 0$, we have $d(z) = a e^{c z} + b$ for some constants $a,b$, which can be normalized to $d(z)= e^{cz}$ by translation and scaling. Its developing map satisfies $d(z+1) = e^c d(z)$ and $d(z+\tau)= e^{c\tau} d(z)$. Thus the holonomy is generated by 
\begin{gather*}
	z\circ \gamma_1 = \rho_1(z) = e^{c}z \\ z\circ \gamma_2= \rho_2(z) = e^{c \tau}z
\end{gather*}

By comparing it with Proposition \ref{prop:rivin}, we see that given any real numbers $(A_1,A_2)\neq (0,0)$, there is a unique circle pattern $X \in P(\Theta)$ on an affine torus with conformal structure $\tau$ and affine parameter $c$ such that
\[
\Re c = A_1, \quad \Re (c \tau) = A_2.
\]
On the other hands, it is known that the imaginary parts take values in a bounded interval.
\begin{proposition}[{\cite[Lemma 4.1]{Lam2021}}]\label{lem:bounded}
	For any Delaunay cross ratio system, both $|\Imaginary c|$ and $|\Imaginary c \tau|$ are bounded by a constant depending only on the triangulation.
\end{proposition}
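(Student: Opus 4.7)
\emph{Proof plan.} The plan is to express $\Im c$ (and analogously $\Im c\tau$) as a combinatorial sum along an edge-path in the triangulation representing $\gamma_1$, then bound each term in the sum using the prescribed Delaunay angles $\Theta$.

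Fix an edge-path $(v_0, v_1, \ldots, v_N = \gamma_1 \cdot v_0)$ in the universal cover of the triangulated torus, with combinatorial length $N$ depending only on the triangulation. Since the developing map $d(z) = e^{cz}$ is nowhere zero, $\log d$ is well-defined on the simply-connected universal cover, and $\log d(z+1) - \log d(z) = c$. Telescoping along the lifted path yields the identity
\[
\Im c \;=\; \sum_{k=0}^{N-1} \arg\frac{d(v_{k+1})}{d(v_k)},
\]
with continuous branches of the arguments chosen along the path.

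To bound the right-hand side, I would work locally at each vertex. The cross ratios $X_e$ have prescribed imaginary parts $\Theta_e \in [0,\pi)$, which constrains the geometry of each developed triangle relative to the origin (the fixed point of the affine holonomy). Expressing each summand $\arg(d(v_{k+1})/d(v_k))$ in terms of these local data, together with equations \eqref{eq:crproduct}--\eqref{eq:crsum} around each $v_k$, would yield a bound of the form $|\Im c| \leq C(\Theta, \text{triangulation})$.

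The main obstacle is controlling the continuous branches of the arguments to rule out spurious $2\pi$-windings: \emph{a priori}, the developed edge-path could wind arbitrarily many times around the origin, inflating the accumulated imaginary part. This would be handled by selecting a shortest edge-path representative of $\gamma_1$ and invoking Delaunay condition (ii) to preclude small contractible dual subloops that enclose the origin. The analogous argument, using an edge-path realizing $\gamma_2$, bounds $|\Im c\tau|$.
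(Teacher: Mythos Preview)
The paper does not prove this proposition; it is quoted from \cite{Lam2021} and only \emph{used} in the proof of Theorem~\ref{thm:main1}. So there is no ``paper's own proof'' to compare against, and your proposal has to stand on its own.

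As written, the plan has two genuine gaps.

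\textbf{(1) Wrong telescoped quantity.} Your sum $\sum_k \arg\bigl(d(v_{k+1})/d(v_k)\bigr)$ measures the winding of the vertex positions $z_{v_k}$ around the fixed point $0$ of the affine holonomy. With continuous branches this equals $\Im c$ \emph{only modulo} $2\pi$: the edge path in the $z$--plane and the reference path $t\mapsto e^{ct}z_{v_0}$ need not be homotopic in $\mathbb{C}^*$, and nothing prevents a vertex $z_{v_k}$ from lying arbitrarily close to $0$, so a single term $\arg(z_{v_{k+1}}/z_{v_k})$ can be as large as you like. Your proposed fix (shortest edge paths plus Delaunay condition~(ii)) controls combinatorial dual loops, not the position of the developed path relative to the analytic fixed point $0$; it does not rule out the winding. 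A cleaner telescoping uses \emph{edge directions} $\arg(z_{v_{k+1}}-z_{v_k})$: the turning angle at each interior vertex is genuinely confined to $(-\pi,\pi)$ once the path does not backtrack, and the holonomy multiplies edge vectors by $e^{c}$, so the total turning recovers $\Im c$ up to an integer that one then pins down via the developing map $d'$. This is closer to how such bounds are usually obtained and gives a constant of the form $N\pi$ with $N$ the combinatorial length of the path.

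\textbf{(2) Dependence on $\Theta$.} The statement asserts a bound depending only on the triangulation, uniform over \emph{all} Delaunay angle structures. Your outline explicitly produces $C(\Theta,\text{triangulation})$, which is strictly weaker and would not suffice for the application in the proof of Theorem~\ref{thm:main1} if one ever wanted uniformity in $\Theta$. Any argument that invokes the specific values $\Theta_e$ term-by-term will have this defect; the bound should come from the number of edges along the path, not from the angles themselves.
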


We take a topological approach to show that $\pi \circ f: P(\Theta) \to \Teich(S_1)$ is a homeomorphism. The following argument is due to Tianqi Wu.

\begin{proof}[Proof of Theorem \ref{thm:main1}]
	
   We parameterize $\Teich(S_1)$ by the upper half plane $\mathbb{H}$ and $P(\Theta)$ by $\Omega$ using Corollary \ref{cor:puncplane}. Then it suffices to consider $\pi\circ f: \Omega \to \mathbb{H}$. Let $\tau_0 = \pi \circ f(0,0)$ be the conformal structure associated to the unique Euclidean torus that supports a circle pattern with intersection angle $\Theta$. 
   
   Writing $\Omega_{0}:= \Omega- \{0,0\}$ and $\mathbb{H}_0-\{\tau_0\}$, it is known from \cite{Lam2021} that $\pi \circ f: \Omega_{0} \to \mathbb{H}_0$ is a covering map. We shall further argue that the covering map has degree $1$. For any fixed $R>0$, we consider a simple loop $\gamma_R$ on $\Omega_0$ given by
   \[
   \gamma_R(t)= (R \cos t, R \sin t)
   \] 
   for $t \in [0,\pi]$. It represents a generator in $\pi_{1}(\Omega_0)$. 
   
   We claim that $\pi \circ f \circ \gamma_R$ is a loop in $\mathbb{H}_{0}$ representing a generator of $\pi_{1}\left(\mathbb{H}_{0}\right)$ as well.
Observe that
\[
\pi \circ f\left(A_{1}, A_{2}\right)=\frac{c \tau}{c}=\frac{\operatorname{Re}(c \tau)+i \cdot \operatorname{Im}(c \tau)}{\operatorname{Re}(c)+i \cdot \operatorname{Im}(c)}=\frac{A_{2}+i \cdot \operatorname{Im}(c \tau)}{A_{1}+i \cdot \operatorname{Im}(c)}
\]
where both $c, \tau$ are continuous functions of $\left(A_{1}, A_{2}\right)$ and $|\operatorname{Im}(c)|,|\operatorname{Im}(c \tau)|$ are bounded. To make the picture clearer, we map the upper half plane to the unit disk via a homeomorphism $g(z)=\frac{z-i}{z+i}$. Then for sufficiently large $R$
\[
g \circ \pi \circ f \circ \gamma_R(t)=\frac{A_{2}-i A_{1}+O(1)}{A_{2}+i A_{1}+O(1)}=\frac{R \sin t-i R \cos t+O(1)}{R \sin t+i R \cos t+O(1)}=e^{i \cdot 2 t}+O\left(R^{-1}\right).
\]
When $t \in[0, \pi]$, it forms the desired loop that generates $\pi_{1}(g(\mathbb{H}_0))$. Thus the covering map $\pi \circ f: \Omega_{0} \to \mathbb{H}_0$ has degree $1$ and hence $\pi\circ f:\Omega \to \mathbb{H}$ is a homeomorphism.
\end{proof}

\section{ Weil-Petersson symplectic form on $\Teich(S_{1,n})$}

We recall the theory of decorated Teichm\"{u}ller space of punctured surfaces \cite{Penner2012}, particularly punctured tori. Given a triangulation $(V,E,F)$ of a torus $S_1$, we can interpret it as an ideal triangulation of a punctured torus $S_{1,n}:=S_1-V$ by removing vertices, where $n:=|V|$ is the number of vertices.

We denote $\Teich(S_{1,n})$ the Teichm\"{u}ller space  consisting of marked complete hyperbolic metrics with cusps at the $n$ punctures. Particularly, it is known that
\[
\Teich(S_{1,n}) \cong  \mathbb{R}^{2n}.
\]
With the triangulation fixed, the Teichm\"{u}ller space can be parametrized by \emph{positively real} cross ratios $X:E \to \mathbb{R}_{>0}$ on edges as follows. Given a complete hyperbolic metric with cusps, there is a developing map of the universal cover to the hyperbolic plane in the Poincare disk model. For any edge $ij$, we focus on one of its lifts to the universal cover, with neighbouring triangles $ijk$ and $jil$. Via the developing map, the four corners are mapped to $z_j,z_k,z_i,,z_l \in S^{1} \subset \mathbb{C}$ counterclockwisely. The cross ratio 
\[
X_{ij} :=  -\frac{(z_k - z_i)(z_l -z_j)}{(z_i - z_l)(z_j - z_k)}  \in \mathbb{R}_{>0}
\]
satisfying $X_{ij}=X_{ji}$ is independent of the choice of the lift. Hence it defines a function $X:E \to \mathbb{R}_{>0}$. Furthermore, we have for every vertex $i$ with adjacent vertices numbered as $1$, $2$, ..., $n$ in the clockwise order counted from the link of $i$,
\begin{gather}
	\Pi_{j=1}^n X_{ij} =1  \label{eq:prodx}.
\end{gather}
Geometrically, the quantity $\log X_{ij}$ is called the shear coordinate, which is the signed hyperbolic distance between the tangency points of the geodesic $ij$ with the respective incircles in the ideal triangles $ijk$ and $jkl$. Conversely, given a function  $X:E \to \mathbb{R}_{>0}$ satisfying \eqref{eq:prodx}, one obtains a complete hyperbolic metric with cusps by gluing ideal triangles with shear coordinates $\log X$.

A tangent vector to the Teichm\"{u}ller space can be described by the logarithmic derivative of the cross ratio, i.e. $x:= \frac{d}{dt} (\log X^{(t)})|_{t=0}$ where $X^{(t)}$ represents a path in $\Teich(S_{g,n})$. In such a way, every tangent vector corresponds to an element in a real vector space
\[
W:=\{ x \in \mathbb{R}^{E}| \forall i \in V,   \sum_j x_{ij}=0\}.
\]
We have an identification of the tangent space
\begin{align}\label{eq:Tteich}
	T_X \Teich(S_{g,n}) \cong W.
\end{align}

In order to introduce the symplectic form, we consider the decorated Teichm\"{u}ller space $\widetilde{\Teich}(S_{1,n})$. It consists of decorated hyperbolic structures, each of which represents a point in $\Teich(S_{g,n})$ together with a choice of horocycle $H_i$ for each puncture $i \in V$. By considering the hyperbolic length of the horocycles at the punctures, one has
\[
\widetilde{\Teich}(S_{g,n}) = \Teich(S_{g,n}) \times \mathbb{R}^{n}_{>0}.
\]
Any decorated hyperbolic structure yields a function  $A:E \to \mathbb{R}_{>0}$ such that $\log A_{ij}$ is the signed hyperbolic distance between the horocycles $H_i$ and $H_j$. It takes positive sign whenever the horocycles are disjoint. Such functions parametrize the decorated Teichm\"{u}ller space.

Similarly, a tangent vector to the decorated Teichm\"{u}ller space can be described by the logarithmic derivative, i.e. $a:= \frac{d}{dt} (\log A^{(t)})|_{t=0}$ where $A^{(t)}$ represents a path in $\Teich(S_{g,n})$. We have an identification of the tangent space
\[
T_A \widetilde{\Teich}(S_{g,n}) \cong \mathbb{R}^{E}.
\]

By forgetting the horocycles, there is a natural projection
\begin{align*}
	\widetilde{\Teich}(S_{g,n}) &\to \Teich(S_{g,n}) \\
	A_{ij} &\mapsto X_{ij} = \frac{A_{ki} A_{lj}}{A_{il} A_{jk}} 
\end{align*}
and for the tangent space
\begin{align*}
	T_A \widetilde{\Teich}(S_{g,n}) &\to T_X \Teich(S_{g,n}) \\
	a_{ij} &\mapsto  x_{ij} = a_{ki}-a_{il}+a_{lj}-a_{jk}
\end{align*}
Particularly, the linear map 
\begin{align*}
	h:\mathbb{R}^{E} &\to W \\
	a& \mapsto x_{ij} = a_{ki}-a_{il}+a_{lj}-a_{jk}
\end{align*}
is surjective.

\begin{theorem}[Penner]\label{thm:penner}
	The pullback of the Weil-Petersson symplectic 2-form $\omega_P$ on $\Teich(S_{g,n})$ is a bilinear form on $\widetilde{\Teich}(S_{g,n})$ 
	\begin{align*}
		\tilde{\omega}_{P} &:= -2 \sum_{ijk \in F} d \log A_{ij} \wedge d \log A_{jk} + d \log A_{jk} \wedge d \log A_{ki} + d \log A_{ki} \wedge d \log A_{ij}. 
	\end{align*}
	It is  invariant under change of horocycles and invariant under edge flipping in the triangulation. 
\end{theorem}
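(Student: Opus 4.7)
The plan is to verify the three assertions of the theorem in turn: (a) the bilinear form $\tilde{\omega}_{P}$ is invariant under change of horocycles and hence descends to a genuine $2$-form on the undecorated space $\Teich(S_{g,n})$; (b) that form is invariant under edge flips of the ideal triangulation; (c) the resulting form coincides with the Weil--Petersson form $\omega_{P}$.

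For (a), I would use the standard description of how lambda-lengths transform under a horocycle change. Pushing the horocycle $H_{i}$ further into the cusp at $i$ by hyperbolic parameter $t_{i}$ shifts each signed distance as $\log A_{ij}\mapsto \log A_{ij}+(t_{i}+t_{j})/2$, so $d\log A_{ij}$ picks up the one-form $(dt_{i}+dt_{j})/2$. Substituting this into a single face contribution
\[
d\log A_{ij}\wedge d\log A_{jk}+d\log A_{jk}\wedge d\log A_{ki}+d\log A_{ki}\wedge d\log A_{ij},
\]
and collecting the terms indexed by each vertex $i$, $j$, $k$ of the triangle, one checks by direct antisymmetry that all new terms cancel. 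Summing over $F$ then gives horocycle invariance, so $\tilde{\omega}_{P}$ is pulled back from a $2$-form $\omega$ on $\Teich(S_{g,n})$.

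For (b), I would use Penner's Ptolemy relation. If a quadrilateral with cyclic vertices $i,k,j,l$ is retriangulated by flipping the diagonal $ij$ to $kl$, then
\[
A_{ij}\,A_{kl}=A_{ik}\,A_{jl}+A_{il}\,A_{jk}.
\]
Taking the logarithmic differential produces a linear relation among five of the $d\log A$'s; substituting it into the difference of the two face contributions (before versus after the flip) reduces that difference to a wedge identity in four one-forms that is identically zero. This shows $\omega$ is intrinsic to the marked hyperbolic surface and does not depend on the ideal triangulation.

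For (c), which is the main obstacle, one must match $\omega$ with the actual Weil--Petersson form. The natural route is to push $\tilde{\omega}_{P}$ through the surjection $h:\mathbb{R}^{E}\to W$ defined after \eqref{eq:Tteich}, rewriting everything in terms of the shear coordinates $x_{ij}$, and then to compare the resulting expression with the known shear-coordinate formula for $\omega_{P}$ (Fock, Bonahon--S\"{o}zen). An alternative path is Penner's direct geometric derivation, which deduces the formula from the Wolpert length-twist pairing along a pants decomposition together with invariance under the mapping class group. Because this identification is the delicate part and is classical, I would defer the detailed computation to \cite{Penner2012} and only record that steps (a) and (b) guarantee that $\tilde{\omega}_{P}$ is a mapping-class-group-invariant $2$-form on $\widetilde{\Teich}(S_{g,n})$ of the correct functional form to be the pullback of $\omega_{P}$.
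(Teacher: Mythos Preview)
The paper does not actually prove this theorem: it is stated as a result of Penner and attributed to \cite{Penner2012}, with no argument given in the text. So there is no ``paper's own proof'' to compare against; the authors simply quote the result and then use it.

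Your sketch is essentially the standard Penner argument and is sound in outline. Two minor points worth tightening if you ever write this out in full. First, with the paper's convention $\log A_{ij}$ equals the signed hyperbolic distance itself (not half of it), so a horocycle shift at $i$ by $t_i$ gives $\log A_{ij}\mapsto \log A_{ij}+t_i+t_j$ rather than $(t_i+t_j)/2$; the cancellation in step (a) is of course unaffected. Second, the Ptolemy relation in its usual additive form $A_{ij}A_{kl}=A_{ik}A_{jl}+A_{il}A_{jk}$ holds for Penner's lambda-lengths $\lambda=e^{\delta/2}$, whereas the paper's $A$ is $e^{\delta}$; you should square-root before invoking Ptolemy, or else restate the flip relation accordingly. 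Neither of these affects the logic of (a) or (b), and your deferral of (c) to \cite{Penner2012} is exactly what the paper itself does.
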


In terms of the logarithmic derivative $a, \tilde{a} \in T_A \widetilde{\Teich}(S_{g,n}) $
\[
\tilde{\omega}_{P}(a,\tilde{a}) =
-2 \sum_{ijk \in F} a_{ij} (\tilde{a}_{jk}-\tilde{a}_{ki}) + a_{jk} (\tilde{a}_{ki}-\tilde{a}_{ij}) + a_{ki} (\tilde{a}_{ij}-\tilde{a}_{jk}).
\]

\begin{corollary}\label{cor:penner}
The Weil-Petersson symplectic form on $\Teich(S_{g,n})$ can be written as follows: $\forall x,\tilde{x} \in W \cong T_X \Teich(S_{g,n})$
\[
\omega_{P}(x,\tilde{x}) = \widetilde{\omega}_{P}(a,\tilde{a})
\]
where $a \in h^{-1}(x)$, $\tilde{a} \in h^{-1}(\tilde{x})$.	
\end{corollary}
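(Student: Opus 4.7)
The plan is to recognize this corollary as essentially a restatement of Theorem~\ref{thm:penner} once one interprets the linear map $h:\mathbb{R}^{E} \to W$ as the differential of the forgetful projection $p:\widetilde{\Teich}(S_{g,n}) \to \Teich(S_{g,n})$. First I would note that logarithmically differentiating the relation $X_{ij} = A_{ki}A_{lj}/(A_{il}A_{jk})$ at a decorated hyperbolic structure reproduces exactly $x_{ij} = a_{ki} - a_{il} + a_{lj} - a_{jk}$, confirming that $dp = h$ under the identifications $T_A\widetilde{\Teich}(S_{g,n}) \cong \mathbb{R}^{E}$ and $T_X\Teich(S_{g,n}) \cong W$ recorded in the text. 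Since Penner's theorem asserts $\tilde\omega_{P} = p^{*}\omega_{P}$, the definition of pullback yields
\[
\tilde\omega_{P}(a,\tilde a) = \omega_{P}(h(a), h(\tilde a)) = \omega_{P}(x,\tilde x),
\]
which is the desired formula. Surjectivity of $h$, noted just before the corollary, guarantees that a lift $a \in h^{-1}(x)$ always exists, so the right-hand side is defined for every $x \in W$.

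The one substantive point is the well-definedness of the right-hand side, namely independence of the choice of lifts $a$ and $\tilde a$. While this is automatic from the pullback interpretation, I would also verify it by hand to make the statement self-contained. A direct count shows $\ker h$ is spanned by tangent vectors of the form $a'_{ij} = \mu_i + \mu_j$ for $\mu \in \mathbb{R}^{V}$: such $a'$ satisfies $h(a') = (\mu_k + \mu_i) - (\mu_i + \mu_l) + (\mu_l + \mu_j) - (\mu_j + \mu_k) = 0$, and since $\dim \mathbb{R}^{E} - \dim W = n = \dim \mathbb{R}^{V}$, these fill out the kernel. Geometrically, such $a'$ is the infinitesimal deformation corresponding to rescaling each horocycle $H_i$ by $e^{\mu_i}$, so independence of the lift is exactly the infinitesimal form of ``invariant under change of horocycles'' in Theorem~\ref{thm:penner}.

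I do not anticipate any genuine obstacle: the content of the corollary is entirely contained in Penner's theorem, and the only real task is to make explicit that the differential of the forgetful projection on decorated Teichmüller space coincides with the combinatorial map $h$. Both the identification of tangent spaces and the formula $x_{ij} = a_{ki} - a_{il} + a_{lj} - a_{jk}$ have already been set up in the preceding paragraphs, so the proof reduces to a short invocation of Theorem~\ref{thm:penner} together with this kernel computation.
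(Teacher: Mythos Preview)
The paper states this corollary without proof, treating it as an immediate consequence of Theorem~\ref{thm:penner}; your proposal correctly supplies the missing details by identifying $h$ with the differential of the forgetful projection and invoking the pullback relation $\tilde\omega_P = p^*\omega_P$. Your kernel computation and dimension count ($|E| = 6g-6+3n$ for a triangulation, so $\dim\mathbb{R}^E - \dim W = n$) are correct, and the injectivity of $\mu \mapsto (\mu_i+\mu_j)_{ij}$ follows since the triangulation contains odd cycles; thus nothing is missing and your argument is exactly what the paper leaves implicit.
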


	For any fixed Delaunay angle structure $\Theta$ on the torus, the mapping from the space of circle patterns with prescribed intersection angles $\Theta$ to the Teichm\"{u}ller space of punctured torus 
	\begin{align*}
		\tilde{f}: P(\Theta) &\to \Teich(S_{g,n}) \\
		X &\mapsto |X|
	\end{align*}
	is an embedding. For every $X \in P(\Theta)$ there is a natural inclusion $T_X P(\Theta) \subset T_{|X|} \Teich(S_{1,n})$. The pullback of $\omega_P$ is a closed two-form on $P(\Theta)$. For it to be a symplectic form on $P(\Theta)$, it remains to check whether it is non-degenerate everywhere.

In \cite{Lam2024s}, the first author expressed the pullback of $\omega_P$ on $P(\Theta)$ in terms of the change of holonomy. We adapt to expression the the case of tori.

\begin{proposition}[{\cite{Lam2024s}[Theorem 1.3]}] Suppose $\Theta$ is a Delaunay angle structure on the torus. Let $X \in P(\Theta)$ and $\rho \in \Hom(\pi_{1},SL(2,\mathbb{C}))$ be a holonomy representation of the complex projective structure. Then for any infinitesimal deformations $x,\tilde{x} \in T_X P(\Theta)$,
\[
 \frac{1}{2}\tilde{f}^*\omega_{P}(x,\tilde{x}) =  \tr \left( \tau_1 \Ad \rho_1 (\tilde{\tau}_2) - \tau_2 \Ad \rho_2 (\tilde{\tau}_1) \right)
\]
where $\tau_r:= \dot{\rho}_r \rho^{-1}_r$ is the logarithmic change of holonomy under the infinitesimal deformation given by $x$.
\end{proposition}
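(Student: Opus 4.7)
The identity is the specialisation of Theorem 1.3 of \cite{Lam2024s} to genus one, so my plan is to invoke that general formula and then simplify it using the torus presentation $\pi_1(S_1) = \langle \gamma_1,\gamma_2 \mid [\gamma_1,\gamma_2]=1\rangle$.

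The general result of \cite{Lam2024s} expresses $\tilde f^*\omega_P$ at any $X$ as a Goldman-type pairing on $\Hom(\pi_1(S_g),SL(2,\mathbb{C}))$ evaluated on the induced holonomy variations $\tau_\gamma := \dot\rho_\gamma \rho_\gamma^{-1}$. The proof there begins with Penner's sum over triangles (Theorem \ref{thm:penner}), lifts each shear coordinate variation via a section of $h$, and collapses the triangle sum by a Stokes-type argument into an expression supported on a standard generating set of $\pi_1(S_g)$; the outcome is the Goldman cup product with respect to the trace form on $\mathfrak{sl}(2,\mathbb{C})$. So the first step in my adaptation is simply to recall this general formula, which I treat as a black box.

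For the torus the only relator is the commutator. Differentiating $\rho_1\rho_2=\rho_2\rho_1$ produces a linear constraint on $(\tau_1,\tau_2)$ that is automatically satisfied by any infinitesimal deformation in $T_X P(\Theta)$, since such deformations are induced from genuine complex projective structures on $S_1$. Plugging the two-generator presentation into the general Goldman cocycle formula, the boundary contributions associated with the commutator relation cancel against each other using this constraint, and one is left with exactly the antisymmetric pairing $\tr(\tau_1 \Ad\rho_1(\tilde\tau_2) - \tau_2 \Ad\rho_2(\tilde\tau_1))$. This is the content of the claimed identity modulo constants.

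The main obstacle is bookkeeping of the normalising constants: Penner's form carries a factor of $-2$, Goldman's cup product has its own conventional coefficient, and the identification of an infinitesimal cross-ratio variation with a Maurer--Cartan cocycle $\dot\rho\rho^{-1}$ involves a sign depending on the convention for the developing map. To pin down the coefficient $1/2$ in the statement I would test the identity on an explicit non-Euclidean structure with $\rho_r(z)=\alpha_r z$ simultaneously diagonal (as in Proposition \ref{prop:rivin}), where both sides can be computed in closed form; continuity on $P(\Theta)-\{X^\dagger\}$ then propagates the equality to the whole space.
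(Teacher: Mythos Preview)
The paper does not prove this proposition at all: it is stated with a citation to \cite{Lam2024s} and used as a black box, exactly as you propose in your first step. Your additional sketch of how the genus-one specialisation of the Goldman cocycle reduces to the two-term expression, and your plan to pin down the constant by testing on a diagonal holonomy, go beyond what the paper itself supplies, but they are consistent with how the result is invoked there.
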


We focus on complex affine tori that are non-Euclidean.
\begin{corollary}\label{cor:pullback} Let $X \in P(\Theta)$ represent a non-Euclidean affine tori with holonomy $\rho$ of the form for $r=1,2$
	\begin{equation}\label{eq:holmat}
			\rho_r = \left(\begin{array}{cc}
			e^{\frac{A_r+ \mathbf{i} B_r}{2}} & 0 \\  0 & e^{-\frac{A_r+ \mathbf{i} B_r}{2}}
		\end{array} \right)
	\end{equation}
	for some $A_r,B_r \in \mathbb{R}$. Given any $x \in T_X P(\Theta)$, we write the change in the affine holonomy as
	\[
	\dot{\rho}_r \rho^{-1}_r =  \left(\begin{array}{cc}
		\frac{\mathfrak{a}_r+ \mathbf{i} \mathfrak{b}_r}{2} & 0 \\  0 & 	-\frac{\mathfrak{a}_r+ \mathbf{i} \mathfrak{b}_r}{2}
	\end{array} \right)
	\]
	where $\mathfrak{a}_r=\dot{A}_r,\mathfrak{b}_r=\dot{B}_r$ for $r=1,2$. Then the pullback of the symplectic form on $T_X P(\Theta)$ is
	\[
	\frac{1}{2}\tilde{f}^*\omega_{P}(x,\tilde{x}) =  (\mathfrak{a}_1+ \mathbf{i} \mathfrak{b}_1) (\tilde{\mathfrak{a}}_2+ \mathbf{i} \tilde{\mathfrak{b}}_2) - (\mathfrak{a}_2+ \mathbf{i} \mathfrak{b}_2) (\tilde{\mathfrak{a}}_1+ \mathbf{i} \tilde{\mathfrak{b}}_1).
	\]
\end{corollary}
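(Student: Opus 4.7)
The strategy is to specialize the trace formula from the preceding proposition to the case where every matrix involved is diagonal. Since the abelian holonomy $\rho_1,\rho_2$ has been normalized into the diagonal form \eqref{eq:holmat}, and by hypothesis each logarithmic change of holonomy $\tau_r = \dot{\rho}_r \rho_r^{-1}$ is again diagonal, the adjoint action becomes trivial: $\Ad \rho_r(\tilde{\tau}_s) = \rho_r \tilde{\tau}_s \rho_r^{-1} = \tilde{\tau}_s$, since any two diagonal $2\times 2$ matrices commute. This is the only structural observation required; everything else is bookkeeping.

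Given this, the proposition reduces to $\tfrac{1}{2}\tilde{f}^{*}\omega_{P}(x,\tilde{x}) = \tr(\tau_1 \tilde{\tau}_2) - \tr(\tau_2 \tilde{\tau}_1)$. A direct computation of the trace of the product of two diagonal matrices of the prescribed shape expresses each summand $\tr(\tau_r \tilde{\tau}_s)$ as a scalar multiple of $(\mathfrak{a}_r + \mathbf{i}\mathfrak{b}_r)(\tilde{\mathfrak{a}}_s + \mathbf{i}\tilde{\mathfrak{b}}_s)$, and assembling the antisymmetrized combination reproduces the stated identity. No edge-summation, no holonomy gluing along loops, and no appeal to the Weil--Petersson definition is needed at this stage; all of that was already absorbed into the proposition.

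I expect no substantive obstacle: the entire argument is a short line of matrix algebra once the diagonal normalization is in hand. The only item worth a remark is that the right-hand side is independent of the particular simultaneous diagonalization of $\rho_1,\rho_2$, which is automatic from the $\Ad$-invariance built into the proposition's formula; hence the output is intrinsically a function of the tangent vectors $x,\tilde{x} \in T_X P(\Theta)$ rather than of the chosen matrix representatives.
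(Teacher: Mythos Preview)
Your approach is exactly what the paper intends: the corollary is stated without proof, as an immediate specialization of the preceding proposition to the diagonal case, and your observation that $\Ad\rho_r$ acts trivially on diagonal $\tilde{\tau}_s$ is the only content needed. One minor caveat: if you carry out the trace explicitly you get $\tr(\tau_r\tilde{\tau}_s)=\tfrac{1}{2}(\mathfrak{a}_r+\mathbf{i}\mathfrak{b}_r)(\tilde{\mathfrak{a}}_s+\mathbf{i}\tilde{\mathfrak{b}}_s)$, which differs from the displayed formula by an overall factor of $2$; this is a harmless normalization discrepancy in the paper and has no effect on the non-degeneracy arguments that follow.
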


Following Proposition \ref{prop:rivin} and Corollary \ref{cor:puncplane}, the tangent plane $T_X P(\Theta) \cong \mathbb{R}^2$ is parametrized by the real part $(\mathfrak{a}_1,\mathfrak{a}_2)=(\dot{A}_1,\dot{A}_2)$ for a non-Euclidean affine torus $X$. As discussed in the next section, the imaginary parts $\mathfrak{b}_1, \mathfrak{b}_2$ depend on $\mathfrak{a}_1,\mathfrak{a}_2$ via the discrete harmonic conjugate.

\section{Action on the period space via harmonic conjugate}\label{sec:harcon}
\subsection{Classical harmonic conjugate}

We recall the classical action of harmonic conjugate on the period space. It hold generally for closed Riemann surfaces $S_g$ but we limit our discussion to tori $S_1$. Given closed loops $\gamma_1, \gamma_2$ that generates the fundamental group. It is known that for any real numbers $\mathfrak{a}_1, \mathfrak{a}_2$, there exists a unique harmonic 1-form $\eta$ such that 
\[
\int_{\gamma_1} \eta = \mathfrak{a}_1 \quad \int_{\gamma_2} \eta = \mathfrak{a}_2.
\]
The space of harmonic 1-forms is isomorphic to the period space $\mathbb{R}^{2}$.

With the conformal structure $\tau$, there is a natural action on the period space via the harmonic conjugate. Suppose $\eta$ is a harmonic 1-form with periods  $\mathfrak{a}_1, \mathfrak{a}_2$. Then its harmonic conjugate $\star\eta$ is again a harmonic 1-form, where $\star$ is the Hodge star operator on differential forms. We denote $\mathfrak{b}_1,\mathfrak{b}_2$ the period of $\star \eta$. We thus obtain a linear map
\begin{align*}
	\mathfrak{h}_{\tau} : \mathbb{R}^{2} &\to \mathbb{R}^{2} \\
	(\mathfrak{a}_1,\mathfrak{a}_2) & \mapsto (\mathfrak{b}_1,\mathfrak{b}_2)
\end{align*}
Recall that up to a multiple constant, there is only one holomorphic 1-form on a torus, whose period is in the form of $(c,c \tau)$ for some $c \in \mathbb{C}$. Since $\eta + \mathbf{i} \star\eta$ is a holomorphic 1-form, we could deduce explicitly that 
\begin{align*}
	\left( \begin{array}{c}
		\mathfrak{b}_1 \\ \mathfrak{b}_2
	\end{array}\right) =
	\mathfrak{h}_{\tau} \left( \begin{array}{c}
		\mathfrak{a}_1 \\ \mathfrak{a}_2
	\end{array}\right) = \left( \begin{array}{cc}
	\frac{\Re \tau}{\Im \tau} & -\frac{1}{\Im \tau} \\ \frac{|\tau|^2}{\Im \tau} & -\frac{\Re \tau}{\Im \tau}
\end{array} \right) \left( \begin{array}{c}
	\mathfrak{a}_1 \\ \mathfrak{a}_2
\end{array}\right)
\end{align*}
It can be verified directly that $\mathfrak{h}_{\tau} \circ \mathfrak{h}_{\tau}= -\mbox{Id}$. 
Over the period space, there is a natural symplectic form $\omega: \mathbb{R}^2 \times \mathbb{R}^2 \to \mathbb{R}$ defined such that
\begin{align*}
	\omega(\mathfrak{a},\tilde{\mathfrak{a}}) =   \mathfrak{a}_1 \tilde{\mathfrak{a}}_2 - \mathfrak{a}_2  \tilde{\mathfrak{a}}_1.
\end{align*}
Observe that for any $\mathfrak{a}=(\mathfrak{a}_1,\mathfrak{a}_2) \in \mathbb{R}^2$
\[
\omega( \mathfrak{a}, \mathfrak{h}_{\tau}(\mathfrak{a})) = \iint_S \eta \wedge \star \eta
\]
is the Dirichlet energy of the harmonic 1-form with period $(\mathfrak{a}_1,\mathfrak{a}_2)$ over the Riemann surface. Generally we have
\begin{align*}
	\omega(\mathfrak{a},\tilde{\mathfrak{a}}) =  \iint_S \eta \wedge \tilde{\eta}
\end{align*}
where $\eta, \tilde{\eta}$ are respectively the unique harmonic 1-form with periods $(\mathfrak{a}_1,\mathfrak{a}_2)$ and $(\tilde{\mathfrak{a}}_1,\tilde{\mathfrak{a}}_2)$. Since the Hodge star operator satisfies 
\[
\star \eta \wedge \star  \tilde{\eta} = \eta \wedge \tilde{\eta}
\]
we deduce that $\mathfrak{h}_{\tau}$ is compatible with $\omega$ in the sense that 
\[
	\omega(\mathfrak{h}_{\tau}(\mathfrak{a}),\mathfrak{h}_{\tau}(\tilde{\mathfrak{a}})) = 	\omega(\mathfrak{a},\tilde{\mathfrak{a}}).
\]
Particularly, each conformal structure $\tau$ defines an inner product on the period space $\mathbb{R}^2$
\[
	\langle \mathfrak{a},\tilde{\mathfrak{a}} \rangle_{\tau} :=  	\omega(\mathfrak{a},\mathfrak{h}_{\tau}(\tilde{\mathfrak{a}})) 
\]
and induces a norm on $\mathbb{R}^2$ via
\[
||\mathfrak{a}||_{\tau}:= \sqrt{\langle \mathfrak{a},\mathfrak{a} \rangle_{\tau} }.
\]

\subsection{Discrete harmonic conjugate}

We investigate the action of discrete harmonic conjugate over the period space. We take the parametrizion of $P(\Theta)$ as in Corollary \ref{cor:puncplane}. Fix $(A_1,A_2) \neq (0,0)$. Consider the circle pattern $X \in P(\Theta)$ on a non-Euclidean affine torus with holonomy in the form of \eqref{eq:holmat} having the given $(A_1,A_2)$. Observe that the lifts of a triangular face differ by scaling and hence all corner angles are invariant under the holonomy. It induces edge weights $c:E \to \mathbb{R}$ with the "cotangent formula" \cite{Pinkall1993}: for every edge $\{ij\}$, we define
\[
c_{ij}= \frac{1}{2} (\cot \angle jki +  \cot \angle ilj)
\]
where $\{jki\}, \{ilj\}$ are two triangles sharing the edge $\{ij\}$. The Delaunay condition implies $c_{ij}=c_{ji} \geq 0$ since $ \angle jki +   \angle ilj = \pi- \Theta_{ij} \in (0,\pi]$. The edge weight $c_{ij}$ is equal to zero if and only if the neighbouring faces sharing the same circumcircle. By removing all edges $ij$ such that $\Theta_{ij}=0$, we obtain a Delaunay cell decomposition $(V',E',F')$ (not necessarily triangulated) with the same vertex set and a positive function $c:E' \to \mathbb{R}_{>0}$.

With the edge weights, we can consider discrete harmonic 1-forms on the dual cell decomposition. A discrete 1-form $\eta$ is a function on oriented edges $\eta: \vec{E}' \to \mathbb{R}$ such that $\eta(ij)=-\eta(ji)$ for every edge $ij$. It is closed if its summation around each vertex is zero, .i.e. for every $i \in V'=V$,
\[
\sum_j \eta(ij) =0
\]
where the summation is over edges in $E'$ adjacent to the vertex $i$. Equivalently, it can be regarded as the summation over all edges of a face dual to vertex $i$ in the dual cell decomposition. For a closed discrete 1-form $\eta$, one can consider its periods
\[
\sum_{\gamma_1} \eta = \mathfrak{a}_1 \quad \sum_{\gamma_2} \eta = \mathfrak{a}_2 
\]
where the summations are respectively over a collection of oriented edges of the dual cell decomposition that form loops homologous to $\gamma_1$ and $\gamma_2$. The closeness condition implies that the summations are path-independent.  

On the other hand, a discrete 1-form is co-closed if for every primal face $\phi \in F'$, the summation
\[
\sum_{ij \in \partial \phi} \frac{1}{c_{ij}}\eta_{ij} =0.
\]
For a co-closed discrete 1-form $\eta$, one can also consider its periods
\[
\sum_{\gamma_1}  \frac{1}{c_{ij}}\eta = \mathfrak{b}_1 \quad \sum_{\gamma_2}  \frac{1}{c_{ij}} \eta = \mathfrak{b}_2 
\]
where the summations are respectively over a collection of oriented edges of the primal cell decomposition that form loops homologous to $\gamma_1$ and $\gamma_2$. 

A discrete 1-form is harmonic if it is closed and co-closed. It induces a linear mapping on the period space
\begin{align*}
	\mathfrak{h}_X : \mathbb{R}^{2} &\to \mathbb{R}^{2} \\
	(\mathfrak{a}_1,\mathfrak{a}_2) & \mapsto (\mathfrak{b}_1,\mathfrak{b}_2)
\end{align*}
which maps the periods of a harmonic 1-form $\eta$ over the dual cell decomposition to the periods of $\frac{1}{c}\eta$ over the primal cell decomposition. Because the edge weights are positive, one can use the maximum principle to show that $\mathfrak{h}_X$ is bijective (See \cite[Theorem 3.9]{BS2014}).

For a discrete harmonic 1-form $\eta$, its discrete Dirichlet energy $\mathcal{E}_c(\eta)$ can be expressed in terms of the periods
\[
\mathcal{E}_c(\eta):= \sum_{ij\in E} \frac{1}{c_{ij}} \eta^2_{ij} = \omega( \mathfrak{a}, \mathfrak{h}_{X}(\mathfrak{a})).
\]
Generally, if $\tilde{\eta}$ is another harmonic 1-form with period $\tilde{\mathfrak{a}}$ then we have
\begin{equation}\label{eq:diswitch}
	\omega( \mathfrak{a}, \mathfrak{h}_{X}(\tilde{\mathfrak{a}})) =  \sum_{ij\in E} \eta_{ij} \frac{\tilde{\eta}_{ij}}{c_{ij}} = \sum_{ij\in E}  \tilde{\eta}_{ij} \frac{\eta_{ij}}{c_{ij}} = \omega( \tilde{\mathfrak{a}}, \mathfrak{h}_{X}(\mathfrak{a})).
\end{equation}

\subsection{Infinitesimal deformations of circle patterns}

We recall results in \cite{Lam2021} relating discrete harmonic 1-forms to infinitesimal deformations of the circle pattern and explain how $T_X P(\Theta)$ is isomorphic to the period space of discrete harmonic 1-forms.

Recall we take $(V',E',F')$ the Delaunay cell decomposition by removing edges $ij$ where $\Theta_{ij}=0$. We write $(\tilde{V}',\tilde{E}',\tilde{F}')$ the lift of the cell decomposition to the universal cover. Let $X \in P(\Theta)$ represent circle patterns on a non-Euclidean affine torus with holonomy in the form of \eqref{eq:holmat}. We consider the Euclidean radii of the circumcircles for every face $R:\tilde{F}' \to \mathbb{R}_{>0}$, which satisfies
\[
R\circ \gamma_1 = e^{A_1} R, \quad R\circ \gamma_2 = e^{A_2} R
\] 
Suppose $\tilde{X} \in P(\Theta)$ is another circle pattern on a non-Euclidean affine torus with Euclidean radii $\tilde{R}:\tilde{F}' \to \mathbb{R}_{>0}$. Then we define $u:\tilde{F}' \to \mathbb{R}$ via
\[
u:= \log \frac{\tilde{R}}{R}.
\] 
It satisfies
\begin{equation}\label{eq:uperiod}
	u\circ \gamma_1 = u + \tilde{A}_1 - A_1, \quad u\circ \gamma_2 = u + \tilde{A}_2 - A_2
\end{equation}
For each circumcircle, there is a compatitbility condition on the radii of the adjacent circles in order to lay them out consistently.
Let $\phi_0$ be any face with circumradius $R_0$. We denote the circumradii of its neighboring faces as $R_1,R_2,R_3,\dots,R_n$. We also write $\Theta_1,\Theta_2,\dots,\Theta_n$ the intersection angles of these neighboring faces with the central face $\phi_0$. Since the angle sum at the circumcenter of the central face $\phi_0$ is $2\pi$, the function $u: F \to \mathbb{R}$ satisfies for every face $\phi_0 \in F$
\begin{align}\label{eq:nonu}
	\sum_{k=1}^{n} \cot^{-1}\left(\frac{1}{\sin \Theta_{k}}(\frac{e^{u_0} R_0}{e^{u_k} R_k}+ \cos  \Theta_{k}) \right) = \pi.
\end{align}
Conversely, any function $u:\tilde{F} \to\mathbb{R}$ satisfying \eqref{eq:nonu} over a circle pattern with radius $R$ determines locally a new circle pattern sharing the same intersection angle.

Suppose we have a 1-parameter family of circle patterns $X^{(t)}\in P(\Theta)$ with raddi $R^{(t)}$ satisfying $X=X^{(t=0)}$, $R=R^{(t=0)}$ and $\dot{R}:= \frac{d}{dt} R^{(t)}|_{t=0}$. We consider $\dot{u}:\tilde{F}'\to \mathbb{R}$ via
\[
\dot{u} = \frac{\dot{R}}{R}
\]
and define a discrete 1-form $\eta$ such that for every oriented edge $ij$
\[
\eta(ij)=\dot{u}_{ij,l} - \dot{u}_{ij,r}
\]
where $(ij,l)$ and $(ij,r)$ are the left and the right face of $ij$. By construction, $\eta$ is a closed 1-form. Equation \eqref{eq:uperiod} implies that $\eta$ is invariant under deck transformations and hence a well-defined closed 1-form on the torus with periods
\[
\sum_{\gamma_1} \eta = \dot{A}_1 \quad \sum_{\gamma_2} \eta = \dot{A}_2.
\]
Differentiating Equation \ref{eq:nonu} yields that $\eta$ is co-closed. Thus $\eta$ is a discrete harmonic 1-form. Conversely, by reversing the construction, every discrete harmonic 1-form corresponds to an infinitesimal deformation of the circle pattern, i.e. a tangent vector in $T_X P(\Theta)$. We summarize the results in \cite{Lam2021} relating the isomorphism between $T_XP(\Theta)$ and the period space $\mathbb{R}^2$.

\begin{proposition}\cite{Lam2021}\label{prop:gtpaper}
	Let $X \in P(\Theta)$ represent circle patterns on a non-Euclidean affine torus with holonomy in the form of \eqref{eq:holmat}. Then every $\mathfrak{a} \in \mathbb{R}^2$ corresponds to an infinitesimal deformation $x \in T_{X}P(\Theta)$ whose change in the affine holonomy is for $r=1,2$
	\[
		\dot{\rho}_r \rho^{-1}_r =  \left(\begin{array}{cc}
		\frac{\mathfrak{a}_r+ \mathbf{i} \mathfrak{b}_r}{2} & 0 \\  0 & 	-\frac{\mathfrak{a}_r+ \mathbf{i} \mathfrak{b}_r}{2}
	\end{array} \right)
	\]
	where $\mathfrak{b}= \mathfrak{h}_X (\mathfrak{a})$.
	
	For any nonzero $\mathfrak{a} \in \mathbb{R}^2$, we have
\begin{equation}\label{eq:ineq2021}
\omega(\mathfrak{a},\mathfrak{h}_X (\mathfrak{a}))= \sum_{ij} \frac{1}{c_{ij}} \eta^2_{ij} > \iint_S \eta^{\dagger} \wedge \star \eta^{\dagger} = \omega(\mathfrak{h}_X(\mathfrak{a}), \mathfrak{h}_{\tau}\mathfrak{h}_X(\mathfrak{a})) 
\end{equation}
where $\eta$ is a discrete harmonic 1-form with period $\mathfrak{a}=(\mathfrak{a}_1,\mathfrak{a}_2)$ and $\eta^{\dagger}$ is a smooth harmonic 1-form with period $\mathfrak{h}^{-1}_{\tau}\mathfrak{h}_X(\mathfrak{a})= - \mathfrak{h}_{\tau}\mathfrak{h}_X(\mathfrak{a})$. In other words, $\star \eta^{\dagger}$ has the same period as $\frac{\eta}{c}$.
\end{proposition}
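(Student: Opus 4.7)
The plan is to prove the three assertions in turn: the correspondence $\mathfrak{a}\leftrightarrow x$, the explicit form of the logarithmic holonomy derivative, and the strict inequality. For the correspondence, I would build directly on the construction recalled immediately before the proposition. Given any $\mathfrak{a}\in\mathbb{R}^2$, the bijectivity of $\mathfrak{h}_X:\mathbb{R}^2\to\mathbb{R}^2$, a consequence of the maximum principle for the positive-weight cotangent Laplacian on $(V',E',F')$, produces a unique discrete harmonic 1-form $\eta$ with dual periods $\mathfrak{a}$. Reversing the chain leading to \eqref{eq:uperiod} and \eqref{eq:nonu}, namely integrating $\eta$ along dual paths to obtain a deck-equivariant $\dot u$ on $\tilde F'$ with monodromies $\mathfrak{a}_1,\mathfrak{a}_2$, and then setting $\dot R := R\,\dot u$, produces the sought infinitesimal deformation $x\in T_X P(\Theta)$. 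Uniqueness is automatic because the same construction is invertible in both directions.

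For the explicit form of $\dot\rho_r\rho_r^{-1}$, I would differentiate \eqref{eq:holmat} directly, which yields a diagonal matrix with entries $\pm(\dot A_r+\mathbf{i}\dot B_r)/2$; the task then reduces to identifying $\dot A_r=\mathfrak{a}_r$ and $\dot B_r=\mathfrak{b}_r$. Equation \eqref{eq:uperiod} immediately forces $\dot A_r$ to be the monodromy of $\dot u$ along $\gamma_r$, which by the construction of $\eta$ is its dual period over a cycle homologous to $\gamma_r$, namely $\mathfrak{a}_r$. For the rotational component $\dot B_r$ the argument is more delicate: one linearises the angle-sum identity \eqref{eq:nonu} at each circumcenter, which produces precisely the co-closedness condition $\sum_{ij\in\partial\phi}\eta_{ij}/c_{ij}=0$, and then integrates the resulting infinitesimal turning angles of the development along a primal loop homologous to $\gamma_r$. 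The total turning equals the primal period of $\eta/c$, which by definition is $(\mathfrak{h}_X\mathfrak{a})_r=\mathfrak{b}_r$. This dictionary between the rotational holonomy and the primal period of the harmonic conjugate is the substantive content of the original argument in \cite{Lam2021}.

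For the inequality \eqref{eq:ineq2021}, I would first identify the left-hand side as the discrete Dirichlet energy of $\eta$, using \eqref{eq:diswitch} with $\tilde{\mathfrak{a}}=\mathfrak{a}$ to get $\omega(\mathfrak{a},\mathfrak{h}_X\mathfrak{a})=\sum_{ij}\eta_{ij}^2/c_{ij}$, and the right-hand side as the smooth Dirichlet energy $\iint_S\eta^\dagger\wedge\star\eta^\dagger$ of the smooth harmonic form whose Hodge dual $\star\eta^\dagger$ carries the same primal periods $\mathfrak{b}=\mathfrak{h}_X\mathfrak{a}$ as $\eta/c$. The bridge between these two energies would be a Whitney-style closed piecewise-smooth 1-form $\hat\eta$ on $S$, built cell-by-cell from the primal 1-form $\eta/c$, having the same primal periods $\mathfrak{b}$ and satisfying the energy comparison $\iint_S\hat\eta\wedge\star\hat\eta\le\sum_{ij}\eta_{ij}^2/c_{ij}$. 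Combining this with the smooth Dirichlet principle, which gives $\iint_S\star\eta^\dagger\wedge\eta^\dagger\le\iint_S\hat\eta\wedge\star\hat\eta$, yields the non-strict inequality. The strict inequality then follows because $\hat\eta$ is only piecewise smooth and cannot coincide with the smooth harmonic minimiser unless $\eta\equiv 0$, which is excluded by $\mathfrak{a}\neq 0$.

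The main obstacle will be the third step, specifically the construction of the interpolant $\hat\eta$. Since edges with $\Theta_{ij}=0$ are removed, the Delaunay cells $(V',E',F')$ are general polygons rather than triangles, so naive Whitney forms on a simplicial refinement do not directly suffice; one must design an interpolation scheme adapted to the circumcentric dual where positivity of $c_{ij}$ on $E'$ is exactly what converts the discrete sum $\sum\eta_{ij}^2/c_{ij}$ into an upper bound for the smooth $L^2$ norm. This quantitative discrete-to-smooth comparison is where the entire geometric weight of the Delaunay hypothesis is concentrated.
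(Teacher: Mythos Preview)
Your treatment of the first two assertions is sound and matches the construction the paper recalls before the proposition; there is nothing to add there.

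For the inequality \eqref{eq:ineq2021} your plan would work, but it is more elaborate than what the paper (following \cite{Lam2021}) actually does, and your ``main obstacle'' is not a real obstacle. The paper's argument rests on an \emph{equality}, not an inequality, between the discrete and a smooth energy: the cotangent weights are characterised by the fact that for a function $v$ on vertices, the smooth Dirichlet energy of its piecewise-linear extension over the original triangulation $(V,E,F)$ is exactly $\sum_{ij\in E} c_{ij}(v_j-v_i)^2$. Here one takes $v$ on $\tilde V$ to be a primitive of $\eta/c$ along primal edges (well-defined by co-closedness), so that $v_j-v_i=\eta_{ij}/c_{ij}$ on $E'$; edges with $c_{ij}=0$ contribute nothing to either side, so the PL energy equals $\sum_{ij\in E'}\eta_{ij}^2/c_{ij}$. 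The PL differential $dv$ is then a closed $1$-form with the same periods $\mathfrak b$ as $\eta/c$, and the smooth Dirichlet principle gives the strict inequality because the unique energy minimiser among closed forms with periods $\mathfrak b$ is the smooth harmonic form $\star\eta^{\dagger}$, which a non-trivial PL form can never equal.

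In particular there is no need for a Whitney-style interpolant on the polygonal decomposition $(V',E',F')$: the PL extension lives on the triangulation $(V,E,F)$, and the passage to $E'$ is purely bookkeeping since the removed edges carry zero weight. Your proposed route via a generic interpolation inequality would require proving something that, for cotangent weights, is already available as an identity.
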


In \cite{Lam2021}, the inequality (Equation \eqref{eq:ineq2021}) is deduced from two observations. First, the discrete Dirichlet energy of $\frac{\eta}{c}$ coincides with the classical energy of its piecewise-linear extension. Second, over closed 1-forms with prescribed periods, the unique minimizer of the classical energy is a smooth harmonic 1-form. 

\begin{corollary}\label{cor:nonid}For any non-zero $\mathfrak{a} \in \mathbb{R}^2$, we have
	\[
	|| \mathfrak{a} ||_{\tau} > || \mathfrak{h}_X (\mathfrak{a}) ||_{\tau} 
	\]
	In particular, $\mathfrak{h}_{X} \circ \mathfrak{h}_{X}  \neq - \mbox{Id}$.
\end{corollary}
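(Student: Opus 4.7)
The plan is to combine the strict inequality in \eqref{eq:ineq2021} with Cauchy--Schwarz applied to the inner product $\langle\cdot,\cdot\rangle_\tau$.

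First I would rewrite the right-hand side of \eqref{eq:ineq2021} in terms of the norm $\|\cdot\|_\tau$. Unpacking the definition $\langle \mathfrak{a},\tilde{\mathfrak{a}}\rangle_\tau = \omega(\mathfrak{a},\mathfrak{h}_\tau(\tilde{\mathfrak{a}}))$ directly yields
\[
\omega(\mathfrak{h}_X(\mathfrak{a}),\mathfrak{h}_\tau\mathfrak{h}_X(\mathfrak{a})) = \langle \mathfrak{h}_X(\mathfrak{a}),\mathfrak{h}_X(\mathfrak{a})\rangle_\tau = \|\mathfrak{h}_X(\mathfrak{a})\|_\tau^{\,2},
\]
so \eqref{eq:ineq2021} reads $\omega(\mathfrak{a},\mathfrak{h}_X(\mathfrak{a})) > \|\mathfrak{h}_X(\mathfrak{a})\|_\tau^{\,2}$.

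Next I would bound the left-hand side from above. Using $\mathfrak{h}_\tau^{\,2} = -\mathrm{Id}$, pick $\tilde{\mathfrak{a}} = -\mathfrak{h}_\tau\mathfrak{h}_X(\mathfrak{a})$ so that $\mathfrak{h}_\tau(\tilde{\mathfrak{a}}) = \mathfrak{h}_X(\mathfrak{a})$; then
\[
\omega(\mathfrak{a},\mathfrak{h}_X(\mathfrak{a})) = \langle \mathfrak{a}, -\mathfrak{h}_\tau\mathfrak{h}_X(\mathfrak{a})\rangle_\tau.
\]
Because $\mathfrak{h}_\tau$ preserves both $\omega$ and itself, it is an isometry of $\|\cdot\|_\tau$, so $\|-\mathfrak{h}_\tau\mathfrak{h}_X(\mathfrak{a})\|_\tau = \|\mathfrak{h}_X(\mathfrak{a})\|_\tau$. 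Applying Cauchy--Schwarz for the (positive-definite) inner product $\langle\cdot,\cdot\rangle_\tau$ gives
\[
\omega(\mathfrak{a},\mathfrak{h}_X(\mathfrak{a})) \;\leq\; \|\mathfrak{a}\|_\tau \,\|\mathfrak{h}_X(\mathfrak{a})\|_\tau.
\]
Chaining this with the previous strict inequality yields
\[
\|\mathfrak{h}_X(\mathfrak{a})\|_\tau^{\,2} \;<\; \|\mathfrak{a}\|_\tau\,\|\mathfrak{h}_X(\mathfrak{a})\|_\tau.
\]
Since $\mathfrak{h}_X$ is bijective (noted just before Proposition \ref{prop:gtpaper}), $\mathfrak{a} \neq 0$ forces $\mathfrak{h}_X(\mathfrak{a}) \neq 0$, and we may divide by $\|\mathfrak{h}_X(\mathfrak{a})\|_\tau$ to obtain $\|\mathfrak{h}_X(\mathfrak{a})\|_\tau < \|\mathfrak{a}\|_\tau$.

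Finally, the ``in particular'' statement follows by contradiction: if $\mathfrak{h}_X\circ\mathfrak{h}_X = -\mathrm{Id}$, iterating the strict inequality gives
\[
\|\mathfrak{a}\|_\tau = \|-\mathfrak{a}\|_\tau = \|\mathfrak{h}_X^{\,2}(\mathfrak{a})\|_\tau < \|\mathfrak{h}_X(\mathfrak{a})\|_\tau < \|\mathfrak{a}\|_\tau,
\]
a contradiction. I do not anticipate a serious obstacle here; the only subtlety is the bookkeeping with $\mathfrak{h}_\tau^{\,2} = -\mathrm{Id}$ and making sure the Cauchy--Schwarz step is applied to the correct pairing so that the $\mathfrak{h}_\tau$-isometry converts the right-hand factor into $\|\mathfrak{h}_X(\mathfrak{a})\|_\tau$.
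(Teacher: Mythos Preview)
Your proposal is correct and follows essentially the same route as the paper: the paper also rewrites both sides of \eqref{eq:ineq2021} via $\langle\cdot,\cdot\rangle_\tau$, uses that $\mathfrak{h}_\tau$ is a $\|\cdot\|_\tau$-isometry to identify $\|-\mathfrak{h}_\tau\mathfrak{h}_X(\mathfrak{a})\|_\tau = \|\mathfrak{h}_X(\mathfrak{a})\|_\tau$, applies Cauchy--Schwarz, and then divides by $\|\mathfrak{h}_X(\mathfrak{a})\|_\tau\neq 0$ using injectivity of $\mathfrak{h}_X$. The only cosmetic difference is that the paper compresses the chain into a single display and leaves the ``in particular'' clause implicit, whereas you spell out the contradiction $\|\mathfrak{a}\|_\tau = \|\mathfrak{h}_X^{2}(\mathfrak{a})\|_\tau < \|\mathfrak{a}\|_\tau$ explicitly.
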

\begin{proof}
	By the Cauchy–Schwarz inequality and Proposition \ref{prop:gtpaper},
	\begin{align*}
		|| \mathfrak{a} ||_{\tau}  \cdot || \mathfrak{h}_X (\mathfrak{a}) ||_{\tau} =	|| \mathfrak{a} ||_{\tau}  \cdot || -\mathfrak{h}_{\tau} \mathfrak{h}_X (\mathfrak{a}) ||_{\tau} \geq & \langle a, -\mathfrak{h}_{\tau} \mathfrak{h}_X(\mathfrak{a}) \rangle_{\tau} \\
		 = &\omega(a,  \mathfrak{h}_X(\mathfrak{a})) \\
		 > &\omega(\mathfrak{h}_X(\mathfrak{a}), \mathfrak{h}_{\tau}\mathfrak{h}_X(\mathfrak{a})) \\
		 =& || \mathfrak{h}_X (\mathfrak{a}) ||^2_{\tau}.
	\end{align*}
Since $\mathfrak{h}_X$ is injective and hence $ || \mathfrak{h}_X (\mathfrak{a}) ||_{\tau} \neq 0$ whenever $\mathfrak{a}$ nonzero, we obtain the claim.
\end{proof}

We are ready to prove the non-degeneracy of the bilinear form. 

\begin{proof}[Proof of Theorem \ref{thm:main2}]
	We take the parametrizion of $P(\Theta)$ as in Corollary \ref{cor:puncplane}. Fix $(A_1,A_2) \neq (0,0)$. Consider the circle pattern $X \in P(\Theta)$ on an non-Euclidean affine torus with holonomy in the form of \eqref{eq:holmat} with the given $(A_1,A_2)$. The tangent space $T_X P(\Theta)$ is isomorphic to $\mathbb{R}^2$ corresponding to the infinitesimal change of $(A_1,A_2)$. For any $x \in T_X P(\Theta)$, we write  $\mathfrak{a}=(\dot{A}_1,\dot{A}_2) \in \mathbb{R}^2$. Writing $\mathfrak{b}=\mathfrak{h}_X(\mathfrak{a})$ and $\tilde{\mathfrak{b}}=\mathfrak{h}_X(\tilde{\mathfrak{a}})$, Corollary \ref{cor:pullback} and Proposition \ref{prop:gtpaper} imply that
	\begin{align*}
	\frac{1}{2}\tilde{f}^*\omega_{P}(x,\tilde{x}) =&  (\mathfrak{a}_1+ \mathbf{i} \mathfrak{b}_1) (\tilde{\mathfrak{a}}_2+ \mathbf{i} \tilde{\mathfrak{b}}_2) - (\mathfrak{a}_2+ \mathbf{i} \mathfrak{b}_2) (\tilde{\mathfrak{a}}_1+ \mathbf{i} \tilde{\mathfrak{b}}_1) \\
	   =& \omega(\mathfrak{a}, \tilde{\mathfrak{a}}) - \omega(\mathfrak{b},\tilde{\mathfrak{b}}) + \mathbf{i} \omega(\mathfrak{b},\tilde{\mathfrak{a}}) + \mathbf{i} \omega(\mathfrak{a},\tilde{\mathfrak{b}}) \\
	   =& \omega(\mathfrak{a}, \tilde{\mathfrak{a}}) - \omega(\mathfrak{h}_X(\mathfrak{a}),\mathfrak{h}_X(\tilde{\mathfrak{a}})) + \mathbf{i} \omega(\mathfrak{h}_X(\mathfrak{a}),\tilde{\mathfrak{a}}) + \mathbf{i} \omega(\mathfrak{a},\mathfrak{h}_X(\tilde{\mathfrak{a}})) \\
	   =& \omega(\mathfrak{a}, \tilde{\mathfrak{a}}) - \omega(\mathfrak{h}_X(\mathfrak{a}),\mathfrak{h}_X(\tilde{\mathfrak{a}})) 
	\end{align*}
   where Equation \eqref{eq:diswitch} is used.
	
	We shall argue that $\tilde{f}^*\omega_{P}$ is non-degenerate on $T_X P(\Theta)$ and we prove it by contradiction. Assume $\tilde{f}^*\omega_P$ degenerates. Take any nonzero $\mathfrak{a}$. Observe that $\mathfrak{a}$ and $\tilde{\mathfrak{a}}:=\mathfrak{h}_X(\mathfrak{a})$ are linearly independent since
	\[
	\omega( \mathfrak{a}, \mathfrak{h}_X(\mathfrak{a})) >0
	\]
	is the discrete Dirichlet energy of a non-trivial discrete harmonic 1-form. The degeneracy of $\tilde{f}^*\omega_{P}$ implies that
	\begin{align*}
	0=& \omega(\mathfrak{a}, \tilde{\mathfrak{a}}) - \omega(\mathfrak{h}_X(\mathfrak{a}),\mathfrak{h}_X(\tilde{\mathfrak{a}})) \\
	=& \omega(\mathfrak{a} + \mathfrak{h}^2_X (\mathfrak{a}),\mathfrak{h}_X(\mathfrak{a})).
	\end{align*}
It yields that the vector $\mathfrak{a} + \mathfrak{h}^2_X (\mathfrak{a})$ is a multiple of $\mathfrak{h}_X(\mathfrak{a})$ for any $\mathfrak{a}$ since $\omega$ is non-degenerate on $\mathbb{R}^2$. Thus there exists a constant $\alpha \in \mathbb{R}$ such that
\[
\mbox{Id} + \mathfrak{h}_X^2 = \alpha \mathfrak{h}_X
\]
where $\mbox{Id}$ is the 2 by 2 identity matrix. Equation \eqref{eq:diswitch} implies that the constant $\alpha$ must be $0$ since for any $\mathfrak{a},\tilde{\mathfrak{a}} \in \mathbb{R}^2$,
\[
\omega( (\mbox{Id} + \mathfrak{h}_X^2)\mathfrak{a} , \tilde{\mathfrak{a}}) = \omega( \mathfrak{a} , (\mbox{Id} + \mathfrak{h}_X^2)\tilde{\mathfrak{a}})
\]
while
\[
\omega( \mathfrak{h}_X(\mathfrak{a}) , \tilde{\mathfrak{a}}) = -\omega( \mathfrak{a} , \mathfrak{h}_X(\tilde{\mathfrak{a}})).
\]
It yields that the bilinear form $\tilde{f}^*\omega_{P}$ being degenerate on $T_X P(\Theta)$ implies that $\mathfrak{h}_X \circ \mathfrak{h}_X=-\mbox{Id}$, which contradicts Corollary \ref{cor:nonid}. Hence $\tilde{f}^*\omega_{P}$ must be non-degenerate on $T_X P(\Theta)$. 
\end{proof}

\section{Example on Euclidean tori}

At the Euclidean structure, the bilinear form $\tilde{f}^*\omega_P$ cannot be reduced to $\omega$ and hence the argument in Section \ref{sec:harcon} fails. A conceptual reason is that the character variety of $P(S)$ degenerates at the set of Euclidean structures and hence investigating the change of holonomy at Euclidean structures have to be handled differently. Anyhow, we believe $\omega_P$ is still non-degenerate at Euclidean tori. 

We illustrate it by considering certain Delaunay angle structures where $P(\Theta)$ can be described explicitly. Suppose $(V,E,F)$ is a triangulated torus obtained via a quotient of the triangular lattice by translation. Combinatorially, we partition the edge set $E$ into three subsets $E_1,E_2,E_3$, where two edges belong to the same $E_i$ if they are ``parallel" (See Figure \ref{fig:tritorus}). Fix any three angles $\Theta_1,\Theta_2,\Theta_3 \in [0,\pi)$ such that $ \Theta_1+ \Theta_2 + \Theta_3 = \pi$. We define a Delaunay angle structure $\Theta:E \to [0,\pi)$ via
\begin{align*}
\Theta_{ij}:= \begin{cases}
	\Theta_1 \quad \text{if} \quad  ij \in E_1\\ \Theta_2  \quad \text{if} \quad  ij \in E_2 \\
	\Theta_3  \quad \text{if} \quad  ij \in E_3
\end{cases}
\end{align*}
Then the space of circle patterns $P(\Theta)\cong \mathbb{R}^2$ is the collection of all $X:E \to \mathbb{C}$ in the form
\begin{align*}
	X_{ij}:= \begin{cases}
		\alpha e^{\mathbf{i} \Theta_1} \quad \text{if} \quad  ij \in  E_1\\ \beta e^{\mathbf{i} \Theta_2}  \quad \text{if} \quad  ij \in E_2 \\
		\frac{1}{\alpha \beta} e^{\mathbf{i} \Theta_3}  \quad \text{if} \quad  ij \in E_3
	\end{cases}
\end{align*}
where $\alpha, \beta$ are positively real numbers. In this way, one can write down the logarithmic derivatives explicitly for the tangent vectors in $T_X P(\Theta)$ and apply Theorem \ref{thm:penner} together with Corollary \ref{cor:penner} to verify that $\tilde{f}^*\omega_{P}$ is non-degenerate everywhere on $P(\Theta)$, particularly including the unique circle pattern $X^{\dagger}$ on an Euclidean torus.

\begin{figure}
	\begin{tikzpicture}[line cap=round,line join=round,>=triangle 45,x=1.5cm,y=1.5cm]
	\clip(-2.0479707411772948,0.919540160353863734) rectangle (3.04788568399187954,3.046084852190591);
	\draw [line width=.5pt] (-2.,1.)-- (-1.,1.);
	\draw [line width=.5pt] (-1.5,1.0293044143525825) -- (-1.5,0.9706955856474176);
	\draw [line width=.5pt] (-1.,1.)-- (0.,1.);
	\draw [line width=.5pt] (-0.5,1.0293044143525825) -- (-0.5,0.9706955856474176);
	\draw [line width=.5pt] (0.,1.)-- (1.,1.);
	\draw [line width=.5pt] (0.5,1.0293044143525825) -- (0.5,0.9706955856474176);
	\draw [line width=.5pt] (-1.,2.)-- (0.,2.);
	\draw [line width=.5pt] (-0.5,2.0293044143525827) -- (-0.5,1.9706955856474178);
	\draw [line width=.5pt] (0.,2.)-- (1.,2.);
	\draw [line width=.5pt] (0.5,2.0293044143525827) -- (0.5,1.9706955856474178);
	\draw [line width=.5pt] (1.,2.)-- (2.,2.);
	\draw [line width=.5pt] (1.5,2.0293044143525827) -- (1.5,1.9706955856474178);
	\draw [line width=.5pt] (0.,3.)-- (1.,3.);
	\draw [line width=.5pt] (0.5,3.0293044143525827) -- (0.5,2.9706955856474178);
	\draw [line width=.5pt] (1.,3.)-- (2.,3.);
	\draw [line width=.5pt] (1.5,3.0293044143525827) -- (1.5,2.9706955856474178);
	\draw [line width=.5pt] (2.,3.)-- (3.,3.);
	\draw [line width=.5pt] (2.5,3.0293044143525827) -- (2.5,2.9706955856474178);
	\draw [line width=.5pt] (-2.,1.)-- (-1.,2.);
	\draw [line width=.5pt] (-1.5293552459854993,1.5120874542293232) -- (-1.4879125457706768,1.4706447540145007);
	\draw [line width=.5pt] (-1.512087454229323,1.5293552459854995) -- (-1.4706447540145005,1.487912545770677);
	\draw [line width=.5pt] (-1.,2.)-- (0.,3.);
	\draw [line width=.5pt] (-0.5293552459854994,2.5120874542293232) -- (-0.48791254577067694,2.4706447540145007);
	\draw [line width=.5pt] (-0.5120874542293232,2.5293552459854998) -- (-0.4706447540145007,2.4879125457706768);
	\draw [line width=.5pt] (1.,3.)-- (0.,2.);
	\draw [line width=.5pt] (0.5293552459854994,2.4879125457706768) -- (0.4879125457706764,2.5293552459854998);
	\draw [line width=.5pt] (0.5120874542293238,2.4706447540145007) -- (0.4706447540145007,2.5120874542293232);
	\draw [line width=.5pt] (-1.,1.)-- (0.,2.);
	\draw [line width=.5pt] (-0.5293552459854994,1.5120874542293232) -- (-0.48791254577067694,1.4706447540145007);
	\draw [line width=.5pt] (-0.5120874542293232,1.5293552459854995) -- (-0.4706447540145007,1.487912545770677);
	\draw [line width=.5pt] (1.,2.)-- (2.,3.);
	\draw [line width=.5pt] (1.4706447540145002,2.5120874542293232) -- (1.5120874542293234,2.4706447540145007);
	\draw [line width=.5pt] (1.487912545770676,2.5293552459854998) -- (1.529355245985499,2.4879125457706768);
	\draw [line width=.5pt] (1.,2.)-- (0.,1.);
	\draw [line width=.5pt] (0.5293552459854994,1.487912545770677) -- (0.4879125457706764,1.5293552459854995);
	\draw [line width=.5pt] (0.5120874542293238,1.4706447540145007) -- (0.4706447540145007,1.5120874542293232);
	\draw [line width=.5pt] (1.,1.)-- (2.,2.);
	\draw [line width=.5pt] (1.4706447540145002,1.5120874542293232) -- (1.5120874542293234,1.4706447540145007);
	\draw [line width=.5pt] (1.487912545770676,1.5293552459854995) -- (1.529355245985499,1.487912545770677);
	\draw [line width=.5pt] (2.,2.)-- (3.,3.);
	\draw [line width=.5pt] (2.470644754014501,2.5120874542293232) -- (2.5120874542293246,2.4706447540145007);
	\draw [line width=.5pt] (2.4879125457706768,2.5293552459854998) -- (2.529355245985499,2.4879125457706768);
	\draw [line width=.5pt] (-1.,2.)-- (1.,3.);
	\draw [line width=.5pt] (-0.034947553351037296,2.515289554591079) -- (-0.008736888337759324,2.4628682245645233);
	\draw [line width=.5pt] (-0.013105332506638987,2.526210665013278) -- (0.013105332506638987,2.4737893349867224);
	\draw [line width=.5pt] (0.008736888337759324,2.537131775435477) -- (0.03494755335103702,2.4847104454089215);
	\draw [line width=.5pt] (0.,2.)-- (2.,3.);
	\draw [line width=.5pt] (0.9650524466489632,2.515289554591079) -- (0.9912631116622406,2.4628682245645233);
	\draw [line width=.5pt] (0.9868946674933612,2.526210665013278) -- (1.0131053325066386,2.4737893349867224);
	\draw [line width=.5pt] (1.0087368883377592,2.537131775435477) -- (1.0349475533510366,2.4847104454089215);
	\draw [line width=.5pt] (1.,2.)-- (3.,3.);
	\draw [line width=.5pt] (1.9650524466489634,2.515289554591079) -- (1.9912631116622408,2.4628682245645233);
	\draw [line width=.5pt] (1.9868946674933614,2.526210665013278) -- (2.0131053325066386,2.4737893349867224);
	\draw [line width=.5pt] (2.008736888337759,2.537131775435477) -- (2.034947553351037,2.4847104454089215);
	\draw [line width=.5pt] (0.,1.)-- (2.,2.);
	\draw [line width=.5pt] (0.9650524466489632,1.5152895545910792) -- (0.9912631116622406,1.462868224564523);
	\draw [line width=.5pt] (0.9868946674933612,1.526210665013278) -- (1.0131053325066386,1.4737893349867222);
	\draw [line width=.5pt] (1.0087368883377592,1.5371317754354772) -- (1.0349475533510366,1.484710445408921);
	\draw [line width=.5pt] (-1.,1.)-- (1.,2.);
	\draw [line width=.5pt] (-0.034947553351037296,1.5152895545910792) -- (-0.008736888337759324,1.462868224564523);
	\draw [line width=.5pt] (-0.013105332506638987,1.526210665013278) -- (0.013105332506638987,1.4737893349867222);
	\draw [line width=.5pt] (0.008736888337759324,1.5371317754354772) -- (0.03494755335103702,1.484710445408921);
	\draw [line width=.5pt] (-2.,1.)-- (0.,2.);
	\draw [line width=.5pt] (-1.0349475533510373,1.5152895545910792) -- (-1.0087368883377592,1.462868224564523);
	\draw [line width=.5pt] (-1.0131053325066388,1.526210665013278) -- (-0.9868946674933609,1.4737893349867222);
	\draw [line width=.5pt] (-0.9912631116622406,1.5371317754354772) -- (-0.9650524466489626,1.484710445408921);
	\begin{scriptsize}
		\draw [fill=black] (-2.,1.) circle (.5pt);
		\draw [fill=black] (-1.,1.) circle (.pt);
		\draw [fill=black] (0.,1.) circle (.5pt);
		\draw [fill=black] (1.,1.) circle (.5pt);
		\draw [fill=black] (-1.,2.) circle (.5pt);
		\draw [fill=black] (0.,3.) circle (.5pt);
		\draw [fill=black] (0.,2.) circle (.5pt);
		\draw [fill=black] (1.,2.) circle (.5pt);
		\draw [fill=black] (2.,2.) circle (.5pt);
		\draw [fill=black] (1.,3.) circle (.5pt);
		\draw [fill=black] (2.,3.) circle (.5pt);
		\draw [fill=black] (3.,3.) circle (.5pt);
	\end{scriptsize}
\end{tikzpicture}
\caption{A triangulated torus obtained via a quotient of the triangular lattice by translation combinatorially. Its edges are partitioned into three types. Edges of the same type are drawn in the same style. }
\label{fig:tritorus}
\end{figure}
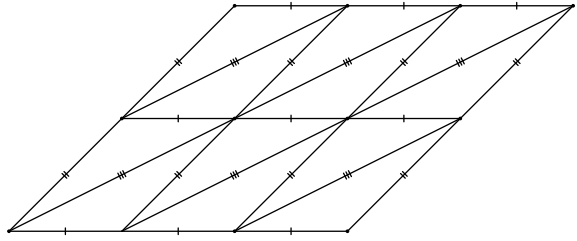

\bibliographystyle{plainurl}
\bibliography{symplectic}

\end{document}